\newenvironment {proof}{{\noindent\bf Proof.}}{\hfill $\Box$ \medskip}
\newtheorem{theorem}{Theorem}[section]
\newtheorem{lemma}[theorem]{Lemma}
\newtheorem{proposition}[theorem]{Proposition}
\newtheorem{remark}[theorem]{Remark}
\newtheorem{definition}[theorem]{Definition}
\newtheorem{corollary}[theorem]{Corollary}
\newtheorem{assumption}[theorem]{Assumption}
\renewcommand {\theequation}{\arabic{section}.\arabic{equation}}
\def \tilde{\widetilde}
\def \bar{\overline}
\newcommand{\Co}{C}
\newcommand{\supp}{\mbox{supp}}
\def\D{\mathbb{D}}
\def\N{\mathbb{N}}
\def\P{\mathbb{P}}
\def\R{\mathbb{R}}
\def\E{\mathbb{E}}
\def\X{\mathbb{X}}
\def\M{\mathcal{M}}
\def\supp{\textrm{supp}}
\def\Fit{\textrm{Fit}}
\def\ind{{\mathchoice {\rm 1\mskip-4mu l} {\rm 1\mskip-4mu l}
{\rm 1\mskip-4.5mu l} {\rm 1\mskip-5mu l}}}
\title{A new proof for the convergence of an individual based model to the Trait substitution sequence}
\author{Ankit Gupta\thanks{CMAP, Ecole Polytechnique, UMR CNRS 7641, Route de Saclay, 91128 Palaiseau C\'{e}dex, France} ~ and~ J.A.J. Metz\thanks{Mathematical Institute \& Institute of Biology \& NCB Naturalis, Leiden, Niels Bohrweg 1, 2333 CA, Leiden, Netherlands ; EEP, IIASA, Laxenburg, Austria} ~ and ~ Viet Chi Tran\thanks{Equipe Probabilit\'{e} Statistique, Laboratoire Paul Painlev\'{e}, UMR CNRS 8524, UFR de Math\'{e}matiques, Universit\'{e} des Sciences et Technologies Lille 1, Cit\'{e} Scientifique, 59655 Villeneuve d'Ascq C\'{e}dex, France ; CMAP, Ecole Polytechnique}}
\date{\today}
\begin{document}

\maketitle

\begin{abstract}
We consider a continuous time stochastic individual based model for a population structured only by an inherited vector trait and with logistic interactions. We consider its limit in a context from adaptive dynamics: the population is large, the mutations are rare and we view the process in the timescale of mutations. Using averaging techniques due to Kurtz (1992), we give a new proof of the convergence of the individual based model to the trait substitution sequence of Metz et al. (1992) first worked out by Dieckman and Law (1996) and rigorously proved by Champagnat (2006): rigging the model such that ``invasion implies substitution'', we obtain in the limit a process that jumps from one population equilibrium to another when mutations occur and invade the population. \end{abstract}

\noindent Keywords: birth and death process; structured population; adaptive dynamics; individual based model; averaging technique; trait substitution sequence\\
\noindent Mathematical Subject Classification (2000): 92D15; 60J80; 60K35; 60F99
\medskip

\section{Introduction: The logistic birth and death model}

We consider a stochastic individual based model (IBM) with trait structure and evolving as a result of births and deaths, that has been introduced by Dieckmann and Law \cite{DieckmannLaw1996} and Metz et al. \cite{metzgeritzmeszenajacobsheerwaarden} and in rigorous detail by Champagnat \cite{champagnat3}. We study its limit in an evolutionary time scale when the population is large and the mutations are rare.


Champagnat \cite{champagnat3} established the first rigorous proof of the convergence of a sequence of such IBMs to the trait substitution sequence process (TSS) introduced by Metz et al.\cite{metznisbetgeritz} (with Metz et al. \cite{metzgeritzmeszenajacobsheerwaarden} as follow up) which, following Dieckmann and Law \cite{DieckmannLaw1996}, can be explained as follows.
In the limit the time scales of ecology and evolution are separated. Mutations are rare and before a mutant arises, the resident population stabilizes around an equilibrium. Under the ``invasion implies substitution'' Ansatz, there cannot be long term coexistence of two different traits. Evolution thus proceeds as a succession of monomorphic population equilibria. The fine structure of the transitions disappear on the time scale that is considered and, when a mutation occurs, invades, and fixates in the population by completely replacing the resident trait, the TSS jumps from one state to another. Champagnat's proof is based on some fine estimates, including some fine large deviations results, to combine several approximations of the microscopic process. We separate the different time scales that are involved using averaging techniques due to Kurtz \cite{kurtz},  and thus propose a new simplified proof of Champagnat's results that skips many technicalities linked with fine approximations of the IBM. The aim of this paper is to exemplify the use of such averaging techniques in adaptive dynamics, which we hope will pave the way for generalizations of the TSS.\\

We consider a structured population where each individual is characterized by a trait $x\in \X$, a compact subset of $\R^d$.
We are interested in large populations. We assume that the population's initial size to be proportional to a parameter $K\in \N^*=\{1,2,\dots \}$, to be interpreted as the area to which the population is confined, that we will let go to infinity while keeping the density constant by counting individuals with weight $1/K$. The population is assumed to be well mixed and its density is assumed to be limited by a fixed availability of resources per unit of area. The population at time $t$ can be described by the following point measure on $\X$
\begin{align}
\label{def:nut}
X^{K}(t) = \frac{1}{K} \sum_{i=1}^{N^{K}(t)} \delta_{x^i_t},
\end{align}
where $N^{K}(t)$ is the total number of individuals in the population at time $t$ and where $x^i_t\in \X$ denotes the trait of individual $i$ living at time $t$, the latter being numbered in lexicographical order.\\

The population evolves by births and deaths. An individual with trait $x\in \X$ gives birth to new individuals at rate $b(x)$, where $b(x)$ is a continuous positive function on $\X$. With probability $u_K p(x)\in [0,1]$, the daughter is a mutant with trait $y$, where $y$ is drawn from the mutation kernel $m(x,dy)$ supported on $\X$. Here $u_K \in [0,1]$ is a parameter depending on $K$ that scales the probability of mutation.
With probability $1-u_K p(x)\in [0,1]$, the daughter is a clone of her mother, with the same value of the trait $x$. In a population described by $X\in \mathcal{M}_F(\X)$, the individual with trait $x$ dies at rate $d(x)+\int_{\X}\alpha(x,y)X(dy)$, where the natural death rate $d(x)$ and the competition kernel $\alpha(x,y)$
are positive continuous functions.

\begin{assumption}\label{hyp:taux}We assume that the functions $b$, $d$ and $\alpha$ satisfy the following hypotheses:
\begin{itemize}
\item[(A)] For all $x\in \X$, $b(x)-d(x)>0$ and $p(x)>0$.
\item[(B)] ``Invasion implies substitution'': For all $x$ and $y$ in $\X$, we either have:
\begin{align}
& (b(y)-d(y))\alpha(x,x)-(b(x)-d(x))\alpha(y,x)<0 \\
\mbox{ or }\qquad & \left\{\begin{array}{l}
(b(y)-d(y))\alpha(x,x)-(b(x)-d(x))\alpha(y,x)>0 \\
(b(x)-d(x))\alpha(y,y)-(b(y)-d(y))\alpha(x,y)<0.
\end{array}\right.\end{align}
\item[(C)] There exist $\underline{\alpha}$ and $\bar{\alpha}>0$ such that for every $x,y\in \X$:
\begin{equation}
0<\underline{\alpha}\leq \alpha(x,y)\leq \bar{\alpha}.
\end{equation}
\end{itemize}
\end{assumption}

Part (A) of Assumption \ref{hyp:taux} says that in the absence of competition, the population has a positive natural growth rate. Also the probability of a birth resulting in a mutation in positive. Part (B) corresponds to a condition known in adaptive dynamics as ``invasion implies substitution''. It can be obtained from the analysis of the equiliria of the Lotka-Volterra system that results from the ordinary large number limit of the logistic competition process without mutation.The condition has as consequences that  when a mutant population manages to reach a sufficiently large size, it cannot coexist with the resident population: one of the two types has to become extinct. Hence, the population should be monomorphic away from the mutation events.\\

In this paper, we use the methods of Kurtz \cite{kurtzaveraging}, based on martingale problems, for separating time scales in measure-valued processes. We show that on the one hand the populations stabilize around their equilibria on the fast ecological scale, while on the other hand, rare mutations at the evolutionary time scale may induce switches from one trait to another.
This provides a new proof to the result of Champagnat \cite{champagnat3}, which we hope may pave the way to generalizations of the TSS.
Our proof differs from the one in  \cite{champagnat3} in that we do not require comparisons with partial differential equations and large deviation results to exhibit
the stabilization of the population around the equilibria determined by the resident trait.

In Section \ref{section:IBM}, we describe the IBM introduced in \cite{champagnat3}. The model accounts only for a trait-structure and otherwise has very simple dynamics. Generalization to more general (possibly functional, as first introduced in \cite{DieckmannHeinoParvinen,ParvinenDieckmannHeino}) trait spaces are considered in \cite{guptametztran} where the ``invasion implies substitution'' assumption is also relaxed (see also \cite{metzgeritzmeszenajacobsheerwaarden,geritzkisdimescenametz,champagnatmeleard2011}). There exist several other possible generations of the TSS, including e.g.  physiological structure \cite{durinxmetzmeszena,meleardtran} or diploidy \cite{colletmeleardmetz}. We consider the process that counts the new traits appearing due to mutations, and the occupation measure $\Gamma^K$ of the process $X^K$ under a changed time scale. The tightness of the couple of processes is studied in Section \ref{section:tightness}. The limiting values are shown to satisfy an equation that is considered in Section \ref{section:characterization}. This equation says that when a favorable mutant appears, then the distribution describing the population jumps to the equilibrium characterized by the new trait with a probability depending on the fitness of the mutant trait compared to the resident one. From the consideration of monomorphic and dimorphic populations and using couplings with branching processes, we prove the convergence in distribution of $\{\Gamma^K\}$ to the occupation measure $\Gamma$ of a pure jump Markov process that is called TSS.

\medskip

\noindent \textbf{Notation:}\\
Let $E$ be a Polish space and let $\mathcal{B}(E)$ be its Borel sigma field. We denote by $\mathcal{M}_F(E)$ (resp. $\mathcal{M}_P(E)$) the set of nonnegative finite (resp. point) measures on $E$, endowed with the topology of weak convergence. If $E$ is compact this topology coincides with the topology of vague convergence (see e.g. \cite{kallenberg}). If $E$ is compact, then for any $M>0$, the set $\{\mu \in \mathcal{M}_F(E) : \mu(E) \leq M\}$ is compact in this topology. For a measure $\mu$, we denote its support by $\supp(\mu)$. If $f$ is a bounded measurable function on $E$ and $\mu \in \mathcal{M}_F(E)$, we use the notation: $\langle \mu,f\rangle=\int_{E} f(x)\mu(dx)$. With an abuse of notation, $\langle \mu,x\rangle=\int_E x \mu(dx)$.
Convergence in distribution of a sequence of random variables (or processes) is denoted by `$\Rightarrow$'. The minimum of any two numbers $a,b \in \R$ is given by $a \wedge b$ and for any $a\in \R$, its positive part is denoted by $[a]^{+}$. For any two $\N^*$-valued sequences $\{a_K : K \in \N^*\}$ and $\{b_K : K \in \N^*\}$ we say that $a_K \ll b_K$ if $a_K/b_K \to 0$ as $K \to \infty$.

Define a class of test functions on $\mathcal{M_F}(\X)$ by
\begin{align*}
\mathbb{F}^{2}_b = \{ F_f : F_f(\mu) = F (\langle \mu, f \rangle ), f \in \Co_b(\X,\R) \textrm{ and } F \in \Co^2_b(\R,\R) \textrm{ with compact support}\}.
\end{align*}
Here $\Co_b(\X,\R)$ is the set of all continuous and bounded real functions on $\X$ and $\Co^2_b(\R,\R)$ is the set of bounded, twice continuously differentiable real-valued functions on $\R$ with bounded first and second derivative. This class $\mathbb{F}^{2}_b$ is separable and it is known (see for example \cite{dawson}) that it characterizes the convergence in distribution on $\M_F(\X)$.

The value at time $t$ of a process $X$ is denoted $X(t)$ or sometimes $X_t$ for notational convenience.

\section{IBM in the evolutionary time-scale}\label{section:IBM}

The process $X^K$ is characterized by its generator $L^K$, defined as follows. To simplify notation, we define the transition kernel:
\begin{equation}
M^K(x,dy)=u_K p(x)\ m(x,dy)+(1-u_K p(x))\ \delta_{x}(dy).
\end{equation}
For any $F_f \in \mathbb{F}^{2}_b$ let
\begin{align}
\label{genlk}
L^K F_{f}(X) & =K \int_{E} b(x) \left[  \int_{\X} \left( F_f\left( X+ \frac{1}{K} \delta_{y} \right) - F_f(X) \right)M^K(x,dy)\right] X(dx) \notag \\
&+K \int_{E} \Big(d(x)+\langle X,\alpha(x,.)\rangle\Big)  \left( F_f\left( X- \frac{1}{K} \delta_{x} \right) - F_f(X) \right)  X(dx).
\end{align}
Let $K\in \N^*$ be fixed. The martingale problem for $L^K$ has a unique solution for any initial condition $X^K(0) \in  \mathcal{M}_{F}(\X)$. It is possible to construct the solution of the martingale problem by considering a stochastic differential equation (SDE) driven by Poisson point processes and which corresponds to the IBM used for simulations (see \cite{champagnatferrieremeleard2,champagnatferrieremeleard}).
We need the following estimate to proceed, which is shown in \cite[Lemma 1]{champagnat3}
\begin{lemma}
\label{lemma:boundednessofmass}
Suppose that $\sup_{K \in \N^* } \E\big( \langle  X^K(0),1 \rangle^{2}\big) < \infty$, then
 \[ \sup_{K\geq 1,\ t\geq 0}\E\Big(\langle X^K(t),1\rangle^2\Big)<+\infty.\]
\end{lemma}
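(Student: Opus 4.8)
The plan is to track the total rescaled mass $n_t := \langle X^K(t),1\rangle$ and to exploit the superlinear negative drift that the competition term $\langle X,\alpha(x,\cdot)\rangle$ induces on its second moment. Formally I would apply the generator $L^K$ to the functional $F(X)=\phi(\langle X,1\rangle)$ with $\phi(u)=u^2$. Since any birth (clone or mutant) raises the mass by $1/K$ and any death lowers it by $1/K$, the trait of the newborn is irrelevant here and the mutation kernel $M^K$ drops out; only the total birth rate $K\langle X,b\rangle$ and the total death rate $K(\langle X,d\rangle+D(X))$ matter, where $D(X):=\iint_{\X^2}\alpha(x,y)X(dx)X(dy)$. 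Writing $n=\langle X,1\rangle$, a direct computation using $\phi(n\pm\tfrac1K)-\phi(n)=\pm\tfrac{2n}{K}+\tfrac1{K^2}$ gives
\begin{equation*}
L^K F(X)=\langle X,b\rangle\Big(2n+\tfrac1K\Big)+\big(\langle X,d\rangle+D(X)\big)\Big(-2n+\tfrac1K\Big).
\end{equation*}
Because $\phi(u)=u^2$ is neither bounded nor compactly supported it does not belong to $\mathbb{F}^2_b$, so I would make this rigorous by localisation: introduce the stopping times $\tau_A=\inf\{t:\langle X^K(t),1\rangle\ge A\}$ together with a smooth truncation of $\phi$ agreeing with $u^2$ on $[0,A]$, and apply Dynkin's formula to the stopped process. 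Finiteness of the mass at all times (so that the $\tau_A$ are well defined and no explosion occurs) follows from dominating $N^K$ by a Yule process with per-capita rate $\bar b:=\sup_\X b$.

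The key step is a drift bound that is uniform in $K$. Setting $\bar b=\sup_\X b$, $\bar d=\sup_\X d$ (finite by continuity on the compact $\X$) and using Assumption \ref{hyp:taux}(C) in the form $\underline\alpha\,n^2\le D(X)\le\bar\alpha\,n^2$, together with $\langle X,b\rangle\le\bar b\,n$, $\langle X,d\rangle\le\bar d\,n$ and discarding the favourable term $-2n\langle X,d\rangle\le0$, I obtain for every $K\ge1$
\begin{equation*}
L^K F(X)\le -2\underline\alpha\,n^3+(2\bar b+\bar\alpha)\,n^2+(\bar b+\bar d)\,n.
\end{equation*}
The essential point is the sign: competition produces a cubic sink $-2\underline\alpha\,n^3$ which overwhelms the quadratic growth coming from births. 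This is precisely where the positivity of $\underline\alpha$ in Assumption (C) is used; without it the mass could grow exponentially.

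To pass from this drift estimate to a bound on $h(t):=\E(n_t^2)$ uniform in both $K$ and $t$, I would integrate and then invoke the monotonicity of $L^p$-norms. From the localised Dynkin formula, the stopped second moment $h_A(t):=\E(n_{t\wedge\tau_A}^2)$ is absolutely continuous and satisfies, using $\E(n^3)\ge h^{3/2}$ and $\E(n)\le h^{1/2}$,
\begin{equation*}
\frac{d}{dt}h_A(t)\le\psi\big(h_A(t)\big),\qquad \psi(h):=-2\underline\alpha\,h^{3/2}+(2\bar b+\bar\alpha)\,h+(\bar b+\bar d)\,h^{1/2}.
\end{equation*}
Since $\psi(h)\to-\infty$ as $h\to\infty$, the function $\psi$ has a largest zero $h^*$, depending only on $\bar b,\bar d,\underline\alpha,\bar\alpha$, with $\psi(h)<0$ for $h>h^*$. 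A standard ODE comparison argument then yields $h_A(t)\le\max\{h(0),h^*\}$ for all $t\ge0$ and all $A$; letting $A\to\infty$ (so $\tau_A\to\infty$) and applying Fatou's lemma gives $h(t)\le\max\{h(0),h^*\}$. As $h(0)=\E(\langle X^K(0),1\rangle^2)$ is bounded uniformly in $K$ by hypothesis, this is exactly the claimed uniform bound.

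I expect the main obstacle to be not the algebra of the drift but carrying out the two limiting steps rigorously and simultaneously: justifying Dynkin's formula for the unbounded functional $u\mapsto u^2$ (hence the truncation and the a priori finiteness of moments under $\tau_A$), and converting the \emph{superlinear} differential inequality into a genuinely uniform-in-time bound, rather than the useless linear-in-$t$ estimate that integrating $L^K F\le\mathrm{const}$ would produce. Both difficulties are resolved by the same device, namely exploiting the cubic sink, but one must check that the truncation constants and the threshold $h^*$ remain independent of $K$.
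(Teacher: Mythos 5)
The paper does not actually prove this lemma: it is quoted verbatim from Champagnat (2006), Lemma 1, so there is no internal proof to compare against. Your strategy --- apply the generator to $\langle X,1\rangle^2$, observe that the quadratic competition term produces a cubic sink $-2\underline{\alpha}\,n^3$ that dominates the quadratic birth contribution, and convert this into a uniform-in-time bound by a Gronwall/comparison argument after localisation --- is exactly the standard argument behind Champagnat's Lemma 1, and your generator computation and drift bound are correct.

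One step, however, is not right as written. From Dynkin's formula for the stopped process you get
\begin{equation*}
\frac{d}{dt}\,h_A(t)=\E\bigl[\ind_{\{t<\tau_A\}}\,L^K F(X^K(t))\bigr],
\qquad h_A(t)=\E\bigl(n_{t\wedge\tau_A}^2\bigr)=\E\bigl[\ind_{\{t<\tau_A\}}n_t^2\bigr]+\E\bigl[\ind_{\{\tau_A\le t\}}n_{\tau_A}^2\bigr],
\end{equation*}
so your drift estimate controls the derivative in terms of $g_A(t):=\E[\ind_{\{t<\tau_A\}}n_t^2]$, not in terms of $h_A(t)$; the boundary term $\E[\ind_{\{\tau_A\le t\}}n_{\tau_A}^2]\approx A^2\,\P(\tau_A\le t)$ is not negligible a priori, and since your $\psi$ is not monotone you cannot simply replace $g_A$ by the larger quantity $h_A$. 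The clean repair uses the Yule-process domination you already invoke, but for more than non-explosion: it yields $\E\bigl(\sup_{s\le t}n_s^2\bigr)\le C(t)\,\E(n_0^2)<\infty$ for each fixed $t$, uniformly in $K$. With this integrable envelope you can let $A\to\infty$ \emph{first} (dominated convergence in the integrated Dynkin identity, using that the pointwise bound $L^KF(X)\le C_1-C_2 n^2$ makes the positive part of the integrand bounded), arriving at $h(t)\le h(0)+\int_0^t\bigl(C_1-C_2 h(s)\bigr)ds$ for the unstopped second moment; Gronwall then gives $h(t)\le\max\{h(0),C_1/C_2\}$. This linearised route also spares you the Jensen step $\E(n^3)\ge h^{3/2}$ and the nonlinear comparison ODE, which are correct but unnecessary. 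So: right idea and right mechanism (the cubic sink from Assumption 1.1(C)), but the order of the two limiting operations must be reversed relative to what you propose.
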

In the sequel, we hence make the following assumptions about the initial condition.
\begin{assumption}\label{assumption:monomorphicIC}
Suppose that the sequence of $\mathcal{M}_F(\X)$-valued random variables $\{X^K(0) : K \in \N^*\}$ satisfy the following conditions.
\begin{itemize}
\item[(A)] There exists $x_0 \in \X$ such that $\textrm{supp}(X^K(0)) =\{x_0\}$ for all $K \in \N^*$.
\item[(B)] $\sup_{K\in \N^*}\E\left(\langle X^K(0),1\rangle^2\right) < \infty$.
\item[(C)] $X^K(0) \Rightarrow X(0)$ as $K \to \infty$ and $\langle X(0), 1 \rangle > 0$ a.s.
\end{itemize}
\end{assumption}

From \eqref{genlk}, we can see that the dynamics has two time scales. The slower time scale is of order $Ku_K$ and it corresponds to the occurence of new mutants while the faster time scale is of order $1$ and it corresponds to the birth-death dynamics. We consider rare mutations and will therefore be lead to suppose that for any $c>0$
\begin{assumption}
\label{timescaleseparation}
\[ \log K \ll \frac{1}{K u_K } \ll e^{cK}.\]
\end{assumption}

Consider the process
\begin{align}
\label{defZK}
Z^K(t) = X^K \left( \frac{t}{Ku_K} \right), \textrm{ } t \geq 0.
\end{align}
In what follows, we denote by $\{ \mathcal{F}^K_t : t\geq 0\}$ the canonical filtration associated with $Z^K$. Due to the change in time, the generator $\mathbb{L}^K$ of $Z^K$ is the generator $L^K$ of $X^K$ multiplied by $(1/Ku_K)$. Hence for any $F_f \in \mathbb{F}^{2}_b$
\begin{align}
\mathbb{L}^K  F_f(Z)= &  \int_{\X} p(x) b(x) \left[ \int_{\X} \left( F_f\big(Z+\frac{1}{K}\delta_{y}\big)-F_f(Z) \right) m(x,dy)\right]Z(dx)
\nonumber\\
& + \frac{1}{K u_K} \left[  \int_{\X} b(x) (1-u_K p(x)) K \left( F_f\left( Z+ \frac{1}{K} \delta_{x} \right) - F_f(Z) \right) Z(dx) \right.  \nonumber\\
 & \left. +  \int_{\X} \left(d(x)+\langle Z,\alpha(x,.)\rangle \right)  K \left( F_f\left( Z- \frac{1}{K} \delta_{x} \right) - F_f(Z) \right) Z(dx)\right].\label{generateur:Lgras}
\end{align}
In the process $Z^K$ we have compressed time so that new mutants occur at rate of order $1$. When we work at this time scale we can expect that between subsequent mutations, the fast birth-death dynamics will average (see e.g. \cite{kurtzaveraging}). Our aim is to exploit this separation in the time scales of ecology (which is related to the births and deaths of individuals) and of evolution (which is linked to mutations).

To study the averaging phenomenon, for the fast birth-death dynamics, we use the martingale techniques developed by Kurtz \cite{kurtzaveraging}. We introduce the occupation measure $\Gamma^K$ defined for any $t\geq 0$ and for any set $A \in \mathcal{B}\left( \mathcal{M}_F(\X) \right)$ by
\begin{align}
\Gamma^K([0,t] \times A) = \int_{0}^{t} \ind_{A}(Z^K(s))ds.
\end{align}
Kurtz's techniques have been used in the context of measure-valued processes in \cite{meleardtransuperage,meleardmetztran} for different population dynamic problems, but an additional difficulty arises here due to the presence of non-linearities also at the fast time scale.

We introduce a $\mathcal{M}_P(\X)$-valued process $\{\chi^K(t) : t \geq 0\}$ which keeps track of the traits that have appeared in the population. That is, for each $t \geq 0$, $\chi^K(t)$ is a counting measure on $\X$ that weights the traits that have appeared in the population until time $t$. The process $\chi^K$ is a pure-jump Markov process that satisfies the following martingale problem. For any $F_f \in \mathbb{F}^{2}_b$
\begin{align}
M^{\chi,K}_t:= & F_f(\chi^K(t))-F_f(\chi^K(0))\\
- & \int_0^t  \int_{\X} p(x) b(x) \int_{\X}\left(F_f(\chi^K(s)+\delta_{y})-F_f(\chi^K(s))\right) m(x,dy)\, Z^K(s,dx)ds\nonumber\\
= &  F_f(\chi^K(t))-F_f(\chi^K(0)) \notag \\
- & \int_0^t  \int_{\mathcal{M}_F(\X)}\left[ \int_{\X} p(x) b(x) \int_{\X}\left( F_f(\chi^K(s)+\delta_{y})-F_f(\chi^K(s))\right) m(x,dy) \mu(dx)\right]\, \Gamma^K(ds \times d\mu),\label{martingale:chi2}
\end{align}
is a square integrable $\{\mathcal{F}^K_t\}$-martingale.\\

The main result of the paper proves the convergence of $\{(\chi^K,\Gamma^K)\}$ to a limit $(\chi,\Gamma)$, where the slow component $\chi$ is a jump Markov
process and the fast component stabilizes in an equilibrium that depends on the value of the slow component.

\begin{theorem}\label{theorem:convergenceTSS}
Suppose that Assumptions \ref{hyp:taux} and \ref{assumption:monomorphicIC} hold.
 \begin{itemize}
 \item[(A)] There exists a process $\chi$ with paths in $\D \left( \mathcal{M}_P(\X),[0,\infty)  \right)$ and a random measure $\Gamma \in \mathcal{M}_F \left([0,\infty)\times \M_F(\X) \right)$ such that $\left( \chi^K, \Gamma^K \right) \Rightarrow \left(\chi,\Gamma \right)$ as $K \to \infty$, where $(\chi,\Gamma)$ are characterized as follows. For all functions $F_f \in \mathbb{F}^{2}_b$,
\begin{multline}
\label{limitingmartingale1}
F_f(\chi(t)) - F_f(\chi(0))\\
-\int_{0}^{t} \int_{\mathcal{M}_F(\X)} \left[\int_\X b(x)p(x)  \int_{\X} \Big(F_f(\chi(s)+\delta_{y})-F_f(\chi(s))\Big) m(x,dy) \mu(dx)\right]\Gamma (ds \times d \mu)
\end{multline}is a square integrable martingale for the filtration
\begin{align}
\label{deffiltration}
\mathcal{F}_t = \sigma \left\{\chi(s), \Gamma\left([0,s] \times A \right) : s \in [0,t], A \in \mathcal{B}\left(\mathcal{M}_F(\X)\right) \right\}
\end{align}and for any $t \geq 0$
\begin{align}
\label{limitingmartingale2}
 \int_{0}^{t}\int_{\mathcal{M}_F(\X)} \mathbb{B} F_f (\mu)\Gamma(ds \times d\mu) = 0 \textrm{ for all }F_f \in \mathbb{F}^{2}_b \textrm{ a.s.},
\end{align}
where the nonlinear operator $\mathbb{B}$ is defined by
\begin{align}
\mathbb{B}F_f(\mu) =  F' \left( \left\langle \mu ,f\right\rangle \right)\int_{ \X } \left( b(x)   - \left( d(x)+\langle \mu, \alpha(x,.)\rangle \right)  \right) f(x) \mu(dx).\label{populationdynamicsoperator}
\end{align}
 \item[(B)] Moreover for any $t>0$ and $A \in \mathcal{B}\left( \mathcal{M}_F(\X)\right)$ we have
\begin{align}
\label{occupationmeasurerelation}
\Gamma([0,t] \times A) = \int_{0}^{t} \ind_{A}\left( \widehat{n}_{\chi'(s)} \delta_{\chi'(s)}\right)ds,
\end{align}
where $\{\chi'(t) : t \geq 0\}$ is a $\X$-valued Markov jump process  with $\chi'(0) = x_0$ and generator given by
\begin{align}
\label{generatorc}
\mathbb{C} f(x) =  b(x)p(x) \widehat{n}_{x} \int_{\X}   \frac{[\mathrm{Fit}(y,x)]^{+}}{b(y)} \left( f(y) - f(x) \right) m(x,dy),
\end{align}
for any $f \in \Co_b(\X,\R)$. Here the population equilibrium $\widehat{n}_x$ and the fitness function $\Fit(y,x)$ are given by:
\begin{equation}
\widehat{n}_x=(b(x)-d(x))/\alpha(x,x),\qquad \mbox{and}\qquad \mathrm{Fit}(y,x) = b(y) - d(y) - \alpha(y,x)\widehat{n}_x .\label{nchapeaux}
\end{equation}
\end{itemize}
\end{theorem}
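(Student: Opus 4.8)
The plan is to prove this convergence via Kurtz's averaging method, organized around two main objectives: tightness of $\{(\chi^K,\Gamma^K)\}$, and identification of the limit through the two martingale relations \eqref{limitingmartingale1}--\eqref{limitingmartingale2} together with the explicit description \eqref{occupationmeasurerelation}. For tightness, I would first use Lemma \ref{lemma:boundednessofmass} to get the uniform second-moment bound $\sup_{K,t}\E(\langle Z^K(t),1\rangle^2)<\infty$, which confines the mass and gives compact containment in $\M_F(\X)$. Tightness of $\{\Gamma^K\}$ in $\M_F([0,\infty)\times\M_F(\X))$ follows from this mass control since the relevant state space is a nice (compactly contained) set. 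For the slow component $\chi^K$, I would apply the Aldous--Rebolledo criterion to the martingale problem \eqref{martingale:chi2}: the predictable finite-variation part has increments controlled by $\int p(x)b(x)\,Z^K(s,dx)\,ds$, and the bracket of $M^{\chi,K}$ is similarly controlled, both being small over small time intervals because the integrand is bounded and the mass is integrable. This yields a converging subsequence $(\chi^K,\Gamma^K)\Rightarrow(\chi,\Gamma)$.

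The next step is to pass to the limit in the two martingale identities. Passing to the limit in \eqref{martingale:chi2} is relatively direct because the integrand $\int p(x)b(x)\int(F_f(\chi+\delta_y)-F_f(\chi))m(x,dy)\mu(dx)$ is a bounded continuous functional of $(\chi,\mu)$, so the occupation-measure integral converges to \eqref{limitingmartingale1} by the joint convergence of $(\chi^K,\Gamma^K)$; uniform integrability from the second-moment bound upgrades this to an $L^2$-martingale in the limit. The crucial averaging identity \eqref{limitingmartingale2} comes from applying the generator $\mathbb{L}^K$ to a test function $F_f$ and isolating the fast part: rewriting \eqref{generateur:Lgras}, the terms of order $1/(Ku_K)$ produce, after a second-order Taylor expansion in $1/K$ of the discrete birth-death increments, precisely $\frac{1}{Ku_K}\int_0^t \mathbb{B}F_f(Z^K(s))\,ds$ plus an $O(1/K)$ correction from the second derivative term and plus a martingale. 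Multiplying the associated martingale identity by $Ku_K$ and using Assumption \ref{timescaleseparation} (so that $Ku_K\to 0$, forcing the slow mutation term and the boundary martingale to vanish relative to the fast part) shows that $\int_0^t\mathbb{B}F_f(Z^K(s))\,ds\to 0$, which in occupation-measure form is exactly $\int_0^t\int \mathbb{B}F_f(\mu)\,\Gamma^K(ds\times d\mu)\to 0$, giving \eqref{limitingmartingale2} in the limit.

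For part (B), the identity \eqref{limitingmartingale2} is the key structural constraint: it says the limiting occupation measure $\Gamma$ is supported, for almost every $s$, on measures $\mu$ that are \emph{stationary} for the fast logistic birth-death flow, \ie that annihilate $\mathbb{B}F_f$ for all test functions. I would argue that the only such fixed points consistent with the monomorphic initial condition and with the support being carried forward by $\chi$ are the Dirac equilibria $\widehat{n}_x\delta_x$: setting the bracket in \eqref{populationdynamicsoperator} to zero for all $f$ forces $\langle\mu,\alpha(x,\cdot)\rangle=b(x)-d(x)$ on $\supp(\mu)$, and combined with the monomorphism maintained away from mutations (a consequence of Assumption \ref{hyp:taux}(B)) this pins down $\mu=\widehat{n}_x\delta_x$ with $\widehat{n}_x=(b(x)-d(x))/\alpha(x,x)$. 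This establishes \eqref{occupationmeasurerelation} once $\chi'$ is identified as the single point currently carrying the equilibrium mass. Finally, to obtain the jump rates \eqref{generatorc}, I would analyze the fate of a newly arisen mutant of trait $y$ in a resident population at equilibrium $\widehat{n}_x\delta_x$ by coupling the mutant subpopulation with a branching process: the mutant arises at rate $b(x)p(x)\widehat{n}_x\,m(x,dy)$, and the branching approximation gives survival (invasion) probability $[\mathrm{Fit}(y,x)]^+/b(y)$, where $\mathrm{Fit}(y,x)=b(y)-d(y)-\alpha(y,x)\widehat{n}_x$ is the initial per-capita growth rate of the rare mutant. By the ``invasion implies substitution'' Assumption \ref{hyp:taux}(B), a successful invasion drives the resident extinct and the population re-equilibrates at $\widehat{n}_y\delta_y$, yielding exactly the generator $\mathbb{C}$.

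The main obstacle, as the authors note, is the presence of nonlinearity at the fast time scale: unlike classical averaging where the fast process is a linear Markov process with a ready-made ergodic theory, here the fast logistic dynamics is nonlinear, so identifying the fixed points of $\mathbb{B}$ and controlling the mutant-invasion/resident-extinction transition requires the branching-process couplings and the equilibrium analysis of the Lotka--Volterra competition system rather than a black-box ergodic theorem. In particular, the delicate point is establishing \eqref{limitingmartingale2} in a form strong enough that its only solutions are the monomorphic equilibria and that the transition probabilities between them are given by the branching survival probabilities, which is where the lower competition bound in Assumption \ref{hyp:taux}(C) and the time-scale separation in Assumption \ref{timescaleseparation} (ensuring the fast dynamics has time to equilibrate and rare mutants either fixate or die out before the next mutation) must be used carefully.
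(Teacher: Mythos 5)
Your proposal follows essentially the same route as the paper: tightness via the uniform second-moment bound, Aldous--Rebolledo and Kurtz's occupation-measure lemma; the averaging identity \eqref{limitingmartingale2} obtained by Taylor-expanding the generator, multiplying the martingale identity by $Ku_K$ and identifying the limit as a continuous finite-variation martingale; and the identification of $\Gamma$ through the fixed points of $\mathbb{B}$ under the ``invasion implies substitution'' assumption combined with branching-process couplings giving the invasion probability $[\mathrm{Fit}(y,x)]^{+}/b(y)$, which is precisely the content of Theorem \ref{maintheorem}, Proposition \ref{prop:identification}, Corollary \ref{corr:identification}, Propositions \ref{proposition:couplingwithbranchingprocesses2}--\ref{main_proposition}, Corollary \ref{maincorollary} and the induction over mutation times in Section \ref{section:characterization}. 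The sketch is correct in structure and matches the paper's argument.
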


This fitness function provides information on the stability of the Lotka-Volterra system (see \eqref{ode} below) and describes the growth rate of a negligible mutant population with trait $y$ in an environment characterized by $\widehat{n}_x$.

\section{Tightness of $\{ \left( \chi^K,\Gamma^K \right)\}$ }\label{section:tightness}

To study the limit when $K\rightarrow +\infty$, we proceed by a tightness-uniqueness argument. First, we show the tightness of the distributions of $\{ \left( \chi^K,\Gamma^K \right) :  K \in \N^*\}$ and derive certain properties of the limiting distribution. The limiting values of $\{ \Gamma^K\}$ satisfy an equation that characterizes the state of the population between two mutations, thanks to the ``invasion implies substitution'' assumption.
\begin{theorem}\label{maintheorem}
Suppose that Assumption \ref{hyp:taux} is satisfied and $\sup_{K\geq 1}\E(\langle X^K(0),1\rangle^2)<\infty$. Then:
\begin{itemize}
\item[(A)] The distributions of $\{ (\chi^K,\Gamma^K) : K \in \N^*\}$ are tight in the space $$\mathcal{P}\left( \D \left(\R_+, \mathcal{M}_P(\X) \right) \times  \mathcal{M}_F \left(\R_+\times \M_F(\X) \right)\right).$$
\item[(B)] Suppose that $(\chi^K,\Gamma^K) \Rightarrow (\chi,\Gamma)$, along some subsequence, as $K \to \infty$. Then $\chi$ is characterized by the  martingale problem given by \eqref{limitingmartingale1} and $\Gamma$ satisfies the equation \eqref{limitingmartingale2}.
\end{itemize}
\end{theorem}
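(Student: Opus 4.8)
The plan is to establish tightness first and then to identify the two limiting equations by passing to the limit in two distinct martingale problems: the one for $\chi^K$ recorded in \eqref{martingale:chi2}, and the one obtained by applying the generator $\mathbb{L}^K$ of \eqref{generateur:Lgras} to $F_f(Z^K)$. For part (A), I would treat the two coordinates separately. For $\chi^K$, the instantaneous rate at which new traits are recorded is $\int_\X p(x)b(x)\,Z^K(s,dx)$, which is bounded in expectation uniformly in $K$ and $s$ by Lemma \ref{lemma:boundednessofmass} together with Assumption \ref{hyp:taux}; the compensator of the number of jumps of $\chi^K$ on any $[0,t]$ is therefore bounded by $Ct$, and $\langle\chi^K(t),1\rangle$ increases by one at each jump, so applying the Aldous--Rebolledo criterion to the martingales $M^{\chi,K}$ gives tightness in $\D(\mathcal{M}_P(\X),[0,\infty))$. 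For $\Gamma^K$, I would use that $\X$ is compact, so that $\{\mu:\langle\mu,1\rangle\le M\}$ is compact in $\mathcal{M}_F(\X)$; since the time marginal $\Gamma^K([0,t]\times\mathcal{M}_F(\X))=t$ is deterministic and, by Markov's inequality and Lemma \ref{lemma:boundednessofmass}, $\E\big[\Gamma^K([0,T]\times\{\mu:\langle\mu,1\rangle>M\})\big]=\int_0^T\P(\langle Z^K(s),1\rangle>M)\,ds$ is small uniformly in $K$, Kurtz's tightness criterion for occupation measures applies and yields tightness of $\{\Gamma^K\}$, hence of the couple.

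For the first half of part (B), I would pass to the limit in \eqref{martingale:chi2}. Writing the compensator as $\int_0^t\int_{\mathcal{M}_F(\X)}G(\chi^K(s),\mu)\,\Gamma^K(ds\times d\mu)$ with $G(\eta,\mu)=\int_\X p(x)b(x)\int_\X(F_f(\eta+\delta_y)-F_f(\eta))\,m(x,dy)\,\mu(dx)$, I note that $G$ is continuous in $(\eta,\mu)$ and grows only linearly in $\langle\mu,1\rangle$. Since $\Gamma$ has Lebesgue time-marginal, almost every $s$ is a continuity time of $\chi$, so the joint convergence $(\chi^K,\Gamma^K)\Rightarrow(\chi,\Gamma)$ lets me pass to the limit in the compensator, the uniform $L^2$ mass bound of Lemma \ref{lemma:boundednessofmass} providing the uniform integrability needed both to treat the linear growth of $G$ and to preserve the martingale property. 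This yields \eqref{limitingmartingale1}.

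The crux is the second half of part (B), the averaging that produces \eqref{limitingmartingale2}. Applying $\mathbb{L}^K$ to $F_f(Z^K)$ yields a martingale $M^K_t=F_f(Z^K(t))-F_f(Z^K(0))-\int_0^t\mathbb{L}^KF_f(Z^K(s))\,ds$, and a second-order Taylor expansion of $F_f(Z\pm\frac1K\delta_x)-F_f(Z)$ decomposes $\mathbb{L}^KF_f$ into a mutation term of order $1$ and a term $\frac{1}{Ku_K}(\mathbb{B}F_f+R^K)$, where $\mathbb{B}$ is the operator \eqref{populationdynamicsoperator} and $R^K$ gathers the $O(1/K)$ Taylor remainder together with the $O(u_K)$ clonal-loss correction. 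Multiplying the martingale by $Ku_K$ and using Assumption \ref{timescaleseparation} (so that $Ku_K\to0$), the boundary term $Ku_K(F_f(Z^K(t))-F_f(Z^K(0)))$ and the mutation term vanish, while a computation of the predictable quadratic variation gives $\E[\langle M^K\rangle_t]=O(1/(K^2u_K))$, whence $(Ku_K)^2\E[\langle M^K\rangle_t]=O(u_K)\to0$ and the martingale contribution disappears. What is left is $\int_0^t\mathbb{B}F_f(Z^K(s))\,ds=\int_{[0,t]\times\mathcal{M}_F(\X)}\mathbb{B}F_f(\mu)\,\Gamma^K(ds\times d\mu)$, and passing to the limit gives \eqref{limitingmartingale2}.

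The main obstacle is exactly this last passage to the limit. The operator $\mathbb{B}F_f(\mu)$ contains the term $\langle\mu,\alpha(x,\cdot)\rangle$ and hence grows quadratically in $\langle\mu,1\rangle$, so it is unbounded and the weak convergence of $\Gamma^K$ does not apply off the shelf. I would resolve this by truncation, replacing $\mathbb{B}F_f(\mu)$ by $\mathbb{B}F_f(\mu)\,\ind_{\{\langle\mu,1\rangle\le M\}}$, passing to the limit for the bounded truncation, and controlling the tail uniformly in $K$ through the second-moment bound of Lemma \ref{lemma:boundednessofmass}, which supplies the uniform integrability of $\langle Z^K(s),1\rangle^2$ in $ds$; the same estimate shows that the contribution of $R^K$ to the time integral, of order $u_K+1/K$ in expectation, vanishes in the limit.
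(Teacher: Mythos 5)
Your proposal is correct and follows the paper's overall strategy for part (A) and the first half of part (B) almost verbatim: tightness of $\chi^K$ via the uniform bound on $\E(n^K(T))$ from Lemma \ref{lemma:boundednessofmass} plus Aldous--Rebolledo, tightness of $\Gamma^K$ via the compact sets $\{\mu:\langle\mu,1\rangle\le M\}$ and Kurtz's Lemma 1.3, and passage to the limit in \eqref{martingale:chi2} to get \eqref{limitingmartingale1}. The interesting divergence is in how you kill the martingale to obtain \eqref{limitingmartingale2}. The paper writes $Ku_K m^{F,f,K}_t$, observes that the boundary and remainder terms vanish so that the martingales converge to $\int_0^t\int\mathbb{B}F_f\,d\Gamma$, and then concludes this limit is zero because it is a \emph{continuous martingale with paths of bounded variation}. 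You instead estimate the predictable quadratic variation directly, $(Ku_K)^2\E\langle M^K\rangle_t=O(u_K)\to0$ (the jump sizes of $F_f(Z^K)$ are $O(1/K)$ and the jump rate is $O(K/Ku_K)$, so this bookkeeping is right), which shows $Ku_KM^K_t\to0$ in $L^2$ and hence that $\int_0^t\int\mathbb{B}F_f\,d\Gamma^K\to0$ in probability before any limit identification. Your route is more quantitative and avoids invoking the finite-variation-martingale argument; the paper's route avoids computing the carr\'e du champ. Both still need the convergence of $\int\mathbb{B}F_f\,d\Gamma^K$ to $\int\mathbb{B}F_f\,d\Gamma$ despite the quadratic growth of $\mathbb{B}F_f$ in the mass; you explicitly flag this and propose truncation, which the paper silently elides. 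One small caveat: the uniform second-moment bound of Lemma \ref{lemma:boundednessofmass} gives uniform integrability of $\langle Z^K(s),1\rangle$ but not, by itself, of $\langle Z^K(s),1\rangle^2$, so to make your truncation tail estimate airtight you would need either a uniform $2+\delta$ moment (obtainable by the same argument as the lemma under a corresponding assumption on $X^K(0)$) or a slightly different argument; since the paper's own proof does not address this point at all, this is a refinement rather than a defect relative to the published argument.
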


In fact, the Assumption \ref{timescaleseparation} can be relaxed and the proof of Theorem \ref{maintheorem} only requires that $\lim_{K\rightarrow +\infty} Ku_K =0$.\\

\begin{proof}[Proof of Th. \ref{maintheorem}]
To prove the tightness of $\{\chi^K : K\geq 1\}$, we use a criterion from \cite{ethierkurtz}. Let $n^K(t) = \langle \chi^K (t),1 \rangle$ for $t \geq 0$.
This process counts the number of mutations in the population. For any $T>0$ we have by Lemma \ref{lemma:boundednessofmass} that
\begin{align}
\label{tighness:expectednumberofmutations}
 \sup_{K \geq 1} \E ( n^K(T)) \leq & \|b\|_\infty T  \sup_{K \geq 1, t\leq T}\E\left( \langle  Z^K(t),1\rangle\right)\leq \|b\|_\infty  T  \sup_{K\geq 1, t \geq 0} \E\left( \langle X^K(t),1\rangle\right)< \infty.
\end{align}
From this estimate and the martingale problem \eqref{martingale:chi2}, it can be checked by using Aldous and Rebolledo criteria that for every test function $f\in \Co_b(\X,\R)$, the laws of $\langle \chi^K, f\rangle$ are tight in $\D([0,\infty),\R)$ and that the compact containment condition is satisfied.

Let us now prove the tightness of $\{\Gamma^K : K \in \N^*\}$. Let $\epsilon>0$ be fixed. Using Lemma \ref{lemma:boundednessofmass}, there exists a $N_{\epsilon}>0$ such that
\begin{align}
\label{etape6}
\sup_{K \geq 1, t \geq 0} \P(\langle Z^K(t),1\rangle > N_{\epsilon})<\epsilon.
\end{align}
Since $\X$ is compact, the set $\mathcal{K}_{\epsilon}=\{\mu\in \M_F(\X),\ \langle \mu,1\rangle\leq N_{\epsilon}\}$ is compact. We deduce that for any $T>0$
\begin{equation}
 \inf_{K \geq 1} \E \left( \Gamma^K\big([0,T]  \times \mathcal{K}_{\epsilon}  \big)\right) \geq (1-\epsilon)T.\label{controlekurtz}
\end{equation}
Indeed
\begin{align*}
\Gamma^K\big([0,T]  \times \mathcal{K}_{\epsilon}  \big)= & \Gamma^K\big([0,T]  \times \mathcal{M}_F(\X)\big)-\Gamma^K\big([0,T]  \times \mathcal{K}_{\epsilon}^c  \big)=T-\int_0^T \ind_{\mathcal{K}_{\epsilon}^c}(Z^K(t))dt
\end{align*}and the result follows from the Fubini theorem and from \eqref{etape6}. From Lemma 1.3 of \cite{kurtzaveraging}, $\{\Gamma^K : K \in \N^*\}$ is a tight family of random measures.
The joint tightness of $\{\left( \chi^K,\Gamma^K \right) : K\in \N^* \}$ is immediate from the tightness of $\{\chi^K : K \in \N^* \}$ and
$\{\Gamma^K : K \in \N^*\}$. This proves part (A).

We now prove part (B). Our proof is adapted from the proof of Theorem 2.1 in \cite{kurtzaveraging}. From part (A) we know that the distributions
$\{(\chi^K,\Gamma^K) : K \in \N^*\}$ are tight. Therefore there exists a subsequence $\{\eta_K\}$ along which $(\chi^K,\Gamma^K)$ converges in distribution to a limit $(\chi,\Gamma)$. We can take the limit in \eqref{martingale:chi2} along this subsequence. Except for a denumberable subset of times, $M_t^{\chi,K}$ converges in distribution to the martingale given by \eqref{limitingmartingale1}.

Let us now show that the limiting value $\Gamma$ satisfies \eqref{limitingmartingale2}. From \eqref{generateur:Lgras} for any $F_f \in \mathbb{F}^{2}_b$, we get that
\begin{align}
m^{F,f,K}_t & =  F_f\left(Z^K(t) \right)-F_f \left(Z^K(0) \right)- \int_0^t \mathbb{L}^K F_f(Z^K(s)) ds  \notag \\
& = F_f\left(Z^K(t) \right) - F_f \left(Z^K(0) \right)- \left(\frac{1}{K u_K} \right)\int_{0}^{t} \int_{\mathcal{M}_F(\X)} \mathbb{B} F_f(\mu) \Gamma^K(ds \times d\mu)- \frac{\delta^{F,f,K}(t)}{K u_K}  \label{martingale:second2}
\end{align}
is a martingale. Here the operator $\mathbb{B}$ is defined by \eqref{populationdynamicsoperator} and
\begin{align}
\label{defndeltaK}
\delta^{F,f,K}(t)=  \int_{0}^{t} \left(K u_K \mathbb{L}^K F_f(Z^K(s)) - \mathbb{B} F_f(Z^K(s))\right)ds.
\end{align}
For any $\mu \in \mathcal{M}_{F}(\X)$ we have
\begin{multline*}
 K u_K \mathbb{L}^K F_f(\mu) - \mathbb{B} F_f(\mu)\\
  \begin{aligned}
& = K u_K \int_{\X} p(x) b(x)\left[ \int_{\X } \left( F_f\left(\mu+\frac{1}{K}\delta_{y}\right)-F_f(\mu) \right) m(x,dy)\right]\mu(dx)  \\
& + K \int_{\X}  b(x) \left( F_f\left(\mu+ \frac{1}{K} \delta_{x} \right) - F_f(\mu) - \frac{1}{K} F'\left( \langle \mu,f  \rangle\right)f(x) \right) \mu(dx)\\
& +  K \int_{\X} \left( d(x) + \langle \mu, \alpha(x,.)\rangle \right)  \left( F_f\left( \mu- \frac{1}{K} \delta_{x} \right) - F_f(\mu) + \frac{1}{K} F'\left( \langle  \mu , f \rangle\right)f(x) \right)  \mu(dx) \\
& - K u_K   \int_{\X}  p(x)b(x) \left( F_f\left( \mu+ \frac{1}{K} \delta_{x} \right) - F_f(\mu) \right)\mu(dx).
\end{aligned}
\end{multline*}
For any $x\in \X$ and $\mu\in \mathcal{M}_F(\X)$, we have by Taylor expansion that for some $\alpha_1, \alpha_2 \in (0,1)$:
\begin{align*}
 F_f\left( \mu \pm \frac{1}{K} \delta_{x} \right) - F_f(\mu) 
& = \pm F' \left(\langle \mu, f\rangle + \alpha_1 \frac{f(x)}{K} \right) \frac{f(x)}{K} \\
& = \pm F' (\langle \mu,f\rangle) \frac{f(x)}{K} + \frac{f(x)^2}{2K^2} F''\left(\langle \mu,f\rangle+\alpha_2 \frac{f(x)}{K}\right).
\end{align*}
Therefore we get
\begin{align*}
\left| F_f\left( \mu \pm \frac{1}{K} \delta_{x} \right) - F_f(\mu) \right| \leq   \frac{\|F'\|_\infty \|f\|_{\infty}^2}{K}.
\end{align*}
and
\begin{align*}
\left|  K\left( F_f\left( \mu \pm \frac{1}{K} \delta_{x} \right) - F_f(\mu)  \mp \frac{1}{K} F' (\langle \mu,f\rangle)f(x) \right) \right| \leq   \frac{\|F''\|_\infty \|f\|_{\infty}^2}{2K}.
\end{align*}
Using these estimates and Assumption \ref{hyp:taux},
\begin{align*}
\left| K u_K \mathbb{L}^K F_f(\mu) - \mathbb{B} F_f(\mu) \right|& \leq 2 u_K \left\|b\right\|_{\infty} \|F'\|_\infty \|f\|_{\infty}^2  \langle  \mu,1 \rangle  \\
& + \frac{\|F''\|_\infty \|f\|_{\infty}^2}{2K} \left( \left(\left\|b\right\|_{\infty} +\left\|d\right\|_{\infty}\right) \langle \mu,1 \rangle + \bar{\alpha}  \langle \mu,1 \rangle^2   \right).
\end{align*}
Pick any $T>0$. This estimate along with Lemma \ref{lemma:boundednessofmass} implies that as $K\rightarrow +\infty$, $\delta^{F,f,K}(t)$ (given by \eqref{defndeltaK}) converges to $0$ in $L^1(d\P)$, uniformly in $t\in [0,T]$. Multiplying \eqref{martingale:second2} by $K u_K$, we get that along the subsequence $\eta_K$, the sequence of martingales $\{Ku_K m^{F,f,K} : K \in \N^*\}$ converge in $L^1(d\P)$, uniformly in $t\in [0,T]$ to $\int_{0}^{t} \int_{\mathcal{M}_F(\X)} \mathbb{B} F_f (\mu)\Gamma(ds \times d\mu). $
The limit is itself a martingale. Since it is continuous and has paths of bounded variation, it must be $0$ at all times a.s. Hence for any $F_f \in \mathbb{F}^{2}_b$,
\[\int_{0}^{t} \int_{\mathcal{M}_F(\X)} \mathbb{B} F_f (\mu)\Gamma(ds \times d\mu) = 0 \textrm{ a.s.}\]
Since $\mathbb{F}^{2}_b$ is separable (\ref{limitingmartingale2}) also holds.
\end{proof}

\section{Characterization of the limiting values}\label{section:characterization}

\subsection{Dynamics without mutation}

As in \cite{champagnat3,champagnatferrieremeleard2}, to understand of the information provided by \eqref{limitingmartingale2}, we are led to consider the dynamics of monomorphic and dimorphic populations. Our purpose in this section is to show that \eqref{populationdynamicsoperator} and Assumption \ref{hyp:taux} (B) characterize the state of the population between two rare mutant invasions. Because of  Assumption \ref{hyp:taux} (B), we will see that two different traits cannot coexist in the long term and thus, we will see that it suffices to work with monomorphic or dimorphic initial populations (i.e. the support of $Z^K_0$ is one or two singletons).\\
In Subsection \ref{subsect1}, we consider monomorphic or dimorphic populations and show convergence of the occupation measures when the final composition in trait of the population is known. For instance, if the remaining trait is $x_0$, then the occupation measure of $Z^K(dx,dt)$ converges to $\widehat{n}_{x_0} \delta_{x_0}(dx)dt$. In Subsection \ref{subsect2}, we use couplings with birth and death processes to show that the distribution of the final trait composition of the population can be computed from the fitness of the mutant and the resident.

\subsubsection{Convergence of the occupation measure $\Gamma^K$ in absence of mutation}\label{subsect1}

First, we show that the ``invasion implies substitution'' Assumption \ref{hyp:taux} (B) provides information on the behavior of a dimorphic process when we know which trait is fixed.

\begin{definition}\label{def:YKL0}
Let $L^K_0$ be the operator $L^K$ (given by \eqref{genlk}) with $p(x) = 0$ for all $x \in \X$. We will denote by $Y^K$ a process with generator $L^K_0$ and with a initial condition that varies according to the case that is studied. $Y^K$ has the same birth-death dynamics as a process with generator $L^K$, but there are no mutations.
\end{definition}

In this section we investigate how a process with generator $L^K_0$ behaves at time scales of order $1/K u_K$, when the population is monomorphic or dimorphic. We start by proving a simple proposition.

\begin{proposition}
\label{prop:identification}
For any $x,y \in \X$ suppose that $\pi \in \mathcal{P}\left( \mathcal{M}_F(\X) \right)$ is such that
\begin{align}
\label{support}
\pi \left( \left\{  \mu \in \mathcal{M}_{F}(\X) : \{x\} \subset \textrm{supp}(\mu) \subset \{x,y\} \right\} \right) = 1
\end{align}
and \begin{equation}
\int_{\mathcal{M}_F(\X)} \mathbb{B}  F_f (\mu)\pi(d\mu) = 0\label{equation:B_pi}
 \end{equation}for all $F_f \in \mathbb{F}^{2}_b$. Then for any $A \in \mathcal{B}\left( \mathcal{M}_F(\X) \right)$ we have
$\pi(A) = \ind_{A}\left( \widehat{n}_x \delta_{x} \right)$ where $\widehat{n}_x$ has been defined in \eqref{nchapeaux}.
\end{proposition}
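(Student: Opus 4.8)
The plan is to use the support hypothesis \eqref{support} to collapse the problem to two dimensions. Since $\pi$-a.e. $\mu$ is of the form $\mu = u\,\delta_x + v\,\delta_y$ with $u>0$ and $v\ge 0$, every $F_f\in\mathbb{F}^2_b$ depends on $\mu$ only through $\langle\mu,f\rangle = u f(x) + v f(y)$, and a direct substitution gives
$$\mathbb{B}F_f(\mu) = F'\big(uf(x)+vf(y)\big)\,\big[\,u\,g_x(u,v)\,f(x) + v\,g_y(u,v)\,f(y)\,\big],$$
where $g_x(u,v)=b(x)-d(x)-\alpha(x,x)u-\alpha(x,y)v$ and $g_y(u,v)=b(y)-d(y)-\alpha(y,x)u-\alpha(y,y)v$ are the per-capita growth rates. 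Writing $\widetilde\pi$ for the image of $\pi$ under $\mu\mapsto(u,v)$, the hypothesis \eqref{equation:B_pi} becomes $\int \mathcal{L}V\,d\widetilde\pi = 0$ for the first-order operator $\mathcal{L}=u g_x\,\partial_u + v g_y\,\partial_v$ and every test function $V(u,v)=F(c_1 u + c_2 v)$, taking $(c_1,c_2)=(f(x),f(y))$ (and hence, by linearity, for finite sums of such ridge functions). One recognizes $\mathcal{L}$ as the generator of the two-dimensional competitive Lotka--Volterra flow, so the first step is to read \eqref{equation:B_pi} as saying that $\widetilde\pi$ is an infinitesimally invariant measure of this flow, supported in $\{u>0,\ v\ge 0\}$.

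The second step is to show that $\widetilde\pi$ is carried by the equilibrium set of the flow. Taking $f$ with $(f(x),f(y))=(1,0)$ and with $(0,1)$ already yields the scalar identities $\int F'(u)\,u\,g_x\,d\widetilde\pi=0$ and $\int F'(v)\,v\,g_y\,d\widetilde\pi=0$; the idea is to combine them into a Lyapunov function. I would use a localized version of $V(u,v)=c_1\big(u-\widehat n_x\ln u\big)+c_2\,v$, whose derivative along the flow is $\mathcal{L}V = c_1(u-\widehat n_x)g_x + c_2\,v\,g_y$, and choose the constants $c_1,c_2>0$ so that the quadratic part is negative while Assumption \ref{hyp:taux}(B) controls the sign of the transverse ($v$-)term, giving $\mathcal{L}V\le 0$ with $\{\mathcal{L}V=0\}$ equal to the equilibria lying in $\{u>0,\ v\ge0\}$. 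Since $\int\mathcal{L}V\,d\widetilde\pi=0$ while $\mathcal{L}V\le 0$, the integrand vanishes $\widetilde\pi$-a.s., so $\widetilde\pi$ concentrates on equilibria. The compact support of $F$ forces a truncation/approximation argument here, but the quadratic coercivity of $g_x,g_y$ together with Assumption \ref{hyp:taux}(C) makes the truncation errors negligible.

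The final step is to identify which equilibria survive. In $\{u>0,\ v\ge0\}$ the equilibria are the monomorphic state $(\widehat n_x,0)$, with $\widehat n_x=(b(x)-d(x))/\alpha(x,x)$ as in \eqref{nchapeaux}, and --- only in the mutual-invasibility regime --- an interior coexistence equilibrium. This is exactly the configuration that Assumption \ref{hyp:taux}(B) forbids: when $y$ can invade the $x$-equilibrium (the first quantity in (B) is positive) the second inequality prevents $x$ from being invaded at the $y$-equilibrium, so no stable dimorphic equilibrium exists, and the constraint $u>0$ inherited from \eqref{support} simultaneously excludes the pure-$y$ state $(0,\widehat n_y)$. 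Hence the only flow-invariant state compatible with the support condition is $(\widehat n_x,0)$, giving $\widetilde\pi=\delta_{(\widehat n_x,0)}$, that is $\pi=\delta_{\widehat n_x\delta_x}$, which is the asserted identity $\pi(A)=\ind_A(\widehat n_x\delta_x)$.

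I expect the main obstacle to be the second step combined with the exclusion of dimorphic mass: producing, within the narrow admissible class $\{F(\langle\mu,f\rangle)\}$, a function whose flow-derivative is sign-definite and vanishes only on the equilibria, and then ruling out that $\widetilde\pi$ place any mass on a coexistence equilibrium. It is precisely here that Assumption \ref{hyp:taux}(B) must be used in full, and the constraint $u>0$ from \eqref{support} is what prevents mass from leaking to the boundary $\{u=0\}$; the delicate case to keep an eye on is an interior equilibrium of the Lotka--Volterra system, whose treatment is the heart of the argument.
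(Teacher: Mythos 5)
Your overall skeleton matches the paper's proof: you pass to the planar image measure of $\pi$ under $\mu\mapsto(\mu\{x\},\mu\{y\})$, and your endgame is exactly the paper's --- Assumption \ref{hyp:taux}(B) forces one coordinate of the coexistence equilibrium $(\widetilde n_x,\widetilde n_y)$ to be negative, and the constraint $n_x>0$ from \eqref{support} excludes $(0,0)$ and $(0,\widehat n_y)$, leaving only $(\widehat n_x,0)$. The divergence, and the gap, is in your middle step. The paper uses no Lyapunov function: it writes \eqref{equation:B_pi} out for the image measure $\pi^*$ and reads off from the arbitrariness of $F$ and of $(f(x),f(y))$ that $\pi^*$ must charge only the common zeros of $n_xg_x$ and $n_yg_y$, then enumerates those zeros. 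Your substitute does not work as stated. First, under the second alternative of Assumption \ref{hyp:taux}(B) one has $\mathrm{Fit}(y,x)>0$, so $(\widehat n_x,0)$ is unstable in the $v$-direction: for your $V$, $\mathcal{L}V(\widehat n_x,\epsilon)=c_2\,\epsilon\,(\mathrm{Fit}(y,x)-\alpha(y,y)\epsilon)>0$ for small $\epsilon>0$ whatever $c_1,c_2>0$ are, so $\mathcal{L}V\le 0$ fails and the step ``the integrand vanishes $\widetilde\pi$-a.s.'' is unavailable precisely in the delicate case where the mutant invades yet the only admissible equilibrium is still $(\widehat n_x,0)$. Second, even when $\mathrm{Fit}(y,x)<0$, the quadratic part of $\mathcal{L}V$ is $-c_1\alpha(x,x)w^2-(c_1\alpha(x,y)+c_2\alpha(y,x))wv-c_2\alpha(y,y)v^2$ with $w=u-\widehat n_x$, and some choice of $c_1,c_2>0$ makes this negative semidefinite only if $\alpha(x,y)\alpha(y,x)\le\alpha(x,x)\alpha(y,y)$; Assumption \ref{hyp:taux}(C) gives two-sided bounds on $\alpha$ but not this determinant inequality. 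Third, $u-\widehat n_x\ln u$ blows up as $u\downarrow 0$, and \eqref{support} gives $\mu\{x\}>0$ only pointwise, not uniformly, so the truncation needed to stay inside $\mathbb{F}^2_b$ is not automatically harmless.

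The honest content of the step you are trying to supply is that an infinitesimally invariant probability measure of the planar competitive Lotka--Volterra flow, tested against ridge functions $F(c_1u+c_2v)$, is carried by the equilibria. This is true, but it is a dynamical fact (no periodic orbits for planar competitive systems, plus Poincar\'e recurrence to exclude mass on heteroclinic orbits), not something delivered by a single sign-definite Lyapunov function of the admissible form --- indeed no such function can exist near an unstable admissible equilibrium. You should either prove it by that dynamical route, or follow the paper in arguing directly that \eqref{equation:B_pi} for all $F_f\in\mathbb{F}^2_b$ forces $\supp(\pi^*)$ into $\{n_xg_x=0\}\cap\{n_yg_y=0\}$; note that even the paper's version of this implication is asserted rather than proved, so it deserves care either way.
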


\begin{proof}
Since $\pi$ satisfies \eqref{support}, any $\mu$ picked from the distribution $\pi$ has the form $\mu = n_x \delta_{x} + n_y \delta_{y}$
with $n_x >0$. Let $\Phi$ be the map from $\mathcal{M}_{F}(\X)$ to $\R_{+} \times \R_{+}$ defined by
\[ \Phi(\mu) = \left( \langle \mu, \ind_{\{x\}} \rangle , \langle \mu, \ind_{\{y\}}  \rangle  \right).\]
Let $\pi^{*}=\pi \circ \Phi^{-1} \in \mathcal{P} \left( \R_{+} \times \R_{+} \right)$ be the image distribution of $\pi$ by $\Phi^{-1}$. From \eqref{equation:B_pi}, replacing $\mathbb{B} F_f$ by its definition we obtain
\begin{align*}
0 & = \int_{\mathcal{M}_F(\X)}F'(\langle \mu ,f\rangle) \left(\langle \mu, (b-d)f\rangle -\int_{E} f(x) \langle \mu, \alpha(x,.)\rangle \mu(dx)\right) \pi(d\mu) \\
& = \int_{\R_{+} \times \R_{+}} F'\left( f(x)n_x+f(y)n_y\right) \left[ \left(  b(x)-d(x) - \alpha(x,x)n_x - \alpha(x,y)n_y  \right) n_x f(x) \right. \\
& \left.+ \left(  b(y)-d(y) - \alpha(y,x)n_x - \alpha(y,y)n_y  \right) n_y f(y) \right] \pi^{*}(dn_x,dn_y).
\end{align*}
This equation holds for all $F_f \in \mathbb{F}^{2}_b$ only if the support of $\pi^{*}$ consists of $(n_x,n_y)$ with $n_x>0$ that satisfy
$\left(  b(x)-d(x) - \alpha(x,x)n_x - \alpha(x,y)n_y  \right) n_x = 0$ and $\left(  b(y)-d(y) - \alpha(y,x)n_x - \alpha(y,y)n_y  \right) n_y = 0$.
The only possible solutions are $(\widehat{n}_x,0)$ and
\begin{align}
&(\widetilde{n}_x,\widetilde{n}_y)\\
&=\left(\frac{(b(x)-d(x))\alpha(y,y)-(b(y)-d(y))\alpha(x,y)}{\alpha(x,x)\alpha(y,y)-\alpha(x,y)\alpha(y,x)},\frac{(b(y)-d(y))\alpha(x,x)-(b(x)-d(x))\alpha(y,x)}{\alpha(x,x)\alpha(y,y)-\alpha(x,y)\alpha(y,x)}\right).\nonumber
\end{align}
However due to Assumptions \ref{hyp:taux}, either $\widetilde{n}^x$ or $\widetilde{n}^y$ is negative and hence $(\widetilde{n}^x,\widetilde{n}^y)$ cannot be in the support of $\pi^{*}$. Therefore $\pi^{*}\left(\{(\widehat{n}_x,0)\} \right) = 1$
and this proves the proposition.
\end{proof}

\begin{remark}
Note that $(0,0)$, $(\widehat{n}_x,0)$, $(0,\widehat{n}_y)$ and $(\widetilde{n}_x,\widetilde{n}_y)$ are the stationary solutions of the following ordinary differential equation that approximates a large population with trait $x$ and $y$:
\begin{align}
\frac{dn_x}{dt}=& n_x(t) \Big(b(x) - d(x) - \alpha(x,x)n_x(t) - \alpha(x,y) n_y(t)\Big)\nonumber\\
\frac{dn_y}{dt}=& n_y(t) \Big(b(y) - d(y) - \alpha(y,x)n_x(t) - \alpha(y,y) n_y(t)\Big).\label{ode}
\end{align}
\end{remark}

Heuristically, the ``invasion implies substitution'' assumption prevents two traits from coexisting in the long run.
If we know which of the trait fixates, Proposition \ref{prop:identification} provides the form of the solution $\pi$ to \eqref{equation:B_pi}. In this case, we can deduce the convergence of the occupation measure of $Y^K(\cdot/Ku_K)$.

\begin{corollary}
\label{corr:identification}
Let $x,y \in \X$. For each $K \in \N^*$, let $\{ Y^K(t) : t \geq 0 \}$ be a process with generator $L^K_0$ and $\textrm{supp}(Y^K(0)) = \{x,y\}$. Let $T>0$, and suppose that there exists a $\delta > 0$ such that:
\begin{align}
\label{supportprobability}
\lim_{K \to \infty} \P \left(Y^K_t\{x\} < \delta  \textrm{ for some } t \in \left[ 0 ,\frac{T}{K u_K} \right]\right) = 0.
\end{align}
Then for any $F_f\in \mathbb{F}^2_b$,
 $$\int_0^T \int_{\mathcal{M}_F(\X)}F_f(\mu)\Gamma_0^K(dt\times d\mu):=\int_{0}^{T} F_f\left( Y^K \left( \frac{t}{K u_K} \right)  \right) dt \Rightarrow T \times F_f\left( \widehat{n}_x \delta_{x} \right)$$ as $K \to \infty$.
\end{corollary}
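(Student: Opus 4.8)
The plan is to run the averaging machinery of Theorem \ref{maintheorem} on the mutation-free process $Y^K$ and then pin down the limiting occupation measure using Proposition \ref{prop:identification}. Let $\Gamma_0^K$ denote the occupation measure of the time-changed process $Y^K(\cdot/Ku_K)$, so that $\Gamma_0^K([0,t]\times A)=\int_0^t \ind_A(Y^K(s/Ku_K))\,ds$ and in particular $\Gamma_0^K([0,t]\times \M_F(\X))=t$ for every $t$. Since $Y^K$ has the same birth-death dynamics as $X^K$, Lemma \ref{lemma:boundednessofmass} applies and the tightness argument of Theorem \ref{maintheorem}(A) (via Lemma 1.3 of \cite{kurtzaveraging}) shows $\{\Gamma_0^K\}$ is tight; fix a subsequence along which $\Gamma_0^K\Rightarrow\Gamma_0$. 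Exactly as in the proof of Theorem \ref{maintheorem}(B), the only change being that the relevant generator is now $\mathbb{L}^K_0=(1/Ku_K)L^K_0$ with $Ku_K\mathbb{L}^K_0=L^K_0$, for which the estimate $|Ku_K\mathbb{L}^K_0 F_f(\mu)-\mathbb{B}F_f(\mu)|\to 0$ holds a fortiori (the mutation term is absent), the limit satisfies $\int_0^t\int_{\M_F(\X)}\mathbb{B}F_f(\mu)\,\Gamma_0(ds\times d\mu)=0$ for all $t$ and all $F_f\in\mathbb{F}^{2}_b$, a.s. Because the time-marginal of $\Gamma_0$ is Lebesgue measure, I disintegrate $\Gamma_0(ds\times d\mu)=ds\,\gamma_s(d\mu)$ with $\gamma_s\in\mathcal{P}(\M_F(\X))$, and using the separability of $\mathbb{F}^{2}_b$ I deduce that for a.e. $s$ one has $\int\mathbb{B}F_f(\mu)\gamma_s(d\mu)=0$ simultaneously for all $F_f$.

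To invoke Proposition \ref{prop:identification} I must verify that $\gamma_s$ is carried by $\{\mu:\{x\}\subset\supp(\mu)\subset\{x,y\}\}$. Since $p\equiv 0$ produces only clonal births, $\supp(Y^K(t))\subset\{x,y\}$ for every $t$, while hypothesis \eqref{supportprobability} says that $E_K=\{Y^K(s/Ku_K)\{x\}\ge\delta\text{ for all }s\in[0,T]\}$ has probability tending to $1$. Hence on $E_K$ the trajectory $s\mapsto Y^K(s/Ku_K)$ stays in $A_\delta=\{\mu:\supp(\mu)\subset\{x,y\},\ \mu\{x\}\ge\delta\}$, which is closed (the total mass is continuous and $\mu\mapsto\mu(\{x\})$, $\mu\mapsto\mu(\{x,y\})$ are upper semicontinuous for the weak topology, the relevant sets being closed). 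Taking the bounded continuous test function $\Psi(\mu)=\mathrm{dist}(\mu,A_\delta)\wedge 1$, which vanishes exactly on $A_\delta$, we get $\int_0^T\Psi(Y^K(s/Ku_K))\,ds=0$ on $E_K$, so $\E[\int\Psi\,d\Gamma_0^K]\le \|\Psi\|_\infty\,T\,\P(E_K^c)\to 0$; passing to the limit yields $\int\Psi\,d\Gamma_0=0$ a.s., i.e. $\Gamma_0$ is carried by $[0,T]\times A_\delta$. Thus for a.e. $s$ we have $\gamma_s(A_\delta)=1$, which gives the required support property, and in particular $\mu\{x\}\ge\delta>0$ so that $x\in\supp(\mu)$.

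With both support conditions verified, Proposition \ref{prop:identification} applies to each $\gamma_s$ and gives $\gamma_s=\delta_{\widehat{n}_x\delta_x}$ for a.e. $s$, i.e. $\Gamma_0(ds\times d\mu)=ds\,\delta_{\widehat{n}_x\delta_x}(d\mu)$ on $[0,T]$, whence $\int_0^T\int F_f(\mu)\,\Gamma_0(ds\times d\mu)=T\,F_f(\widehat{n}_x\delta_x)$. Since every subsequential limit equals this same deterministic constant, the full family $\int_0^T F_f(Y^K(s/Ku_K))\,ds$ converges in distribution (equivalently, in probability) to $T\,F_f(\widehat{n}_x\delta_x)$, which is the claim. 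The main obstacle is the second paragraph: transferring the non-extinction bound $\mu\{x\}\ge\delta$ to the limiting occupation measure despite $\mu\mapsto\mu\{x\}$ being discontinuous for the weak topology. The resolution is that upper semicontinuity makes the super-level set $A_\delta$ closed — precisely the direction in which a lower bound survives a weak limit — while hypothesis \eqref{supportprobability} supplies the asymptotically full probability on which the bound holds, so the bounded test function $\Psi$ carries the information to $\Gamma_0$.
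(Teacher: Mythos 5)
Your proposal is correct and follows essentially the same route as the paper: tightness and the averaging identity $\int \mathbb{B}F_f\,d\Gamma_0=0$ inherited from Theorem \ref{maintheorem}, transfer of the non-extinction condition \eqref{supportprobability} to the limit via closedness of the set $\{\mu : \supp(\mu)\subset\{x,y\},\ \mu\{x\}\ge\delta\}$, and identification through Proposition \ref{prop:identification}. The only cosmetic differences are that you disintegrate $\Gamma_0(ds\times d\mu)=ds\,\gamma_s(d\mu)$ and apply the proposition to each slice $\gamma_s$, where the paper applies it once to the time-average $\pi(\cdot)=\Gamma_0([0,T]\times\cdot)/T$, and that you carry the lower bound $\mu\{x\}\ge\delta$ to the limit with the continuous test function $\mathrm{dist}(\cdot,A_\delta)\wedge 1$ rather than by the portmanteau inequality on the closed set directly.
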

\begin{proof}
As in part (A) of Theorem \ref{maintheorem} we can show that $\{\Gamma^K_0 : K \in \N^*\}$ is tight in the space $\mathcal{P}\left(\mathcal{M}_F \left([0,T]\times \M_F(\X) \right)\right)$. Let $\Gamma_0$ be a limit point. Then from part (C) of Theorem \ref{maintheorem} we get that
\begin{align}
\int_{0}^{T}\int_{\mathcal{M}_F(\X)} \mathbb{B} F_f (\mu)\Gamma_0(dt \times d\mu) = 0 \textrm{ for all } F_f \in \mathbb{F}^{2}_b \textrm{ a.s.,}
\end{align}
where the operator $\mathbb{B}$ is given by \eqref{populationdynamicsoperator}.

Since $\textrm{supp}(Y^K(0)) \subset \{x,y\}$ we also have that $\textrm{supp}(Y^K(t)) \subset \{x,y\}$ for all $t \geq 0$. Let
\[ \mathcal{S}_{\delta} = \left\{ \mu \in \mathcal{M}_F(\X) :  \mu\{x\}  \leq \delta \right\}.\]
Observe that $\Gamma^K_0([0,T]\times \mathcal{S}_{\delta})\leq T$ a.s. and
\begin{align*}
0\leq \E \left( T -\Gamma^K_0([0,T]\times \mathcal{S}_{\delta}) \right) & =  K u_K \E \left[ \int_{0}^{\frac{T}{K u_K}} \left(1 -  \ind_{\mathcal{S}_{\delta}}\left( Y^K(t) \right) \right)dt \right] \\
& \leq T\ \P \left( Y^K_t\{x\} < \delta  \textrm{ for some } t \in \left[ 0 ,\frac{T}{K u_K} \right]\right).
\end{align*}
Hence by \eqref{supportprobability} we get that $ \Gamma^K_0 ([0,T] \times \mathcal{S}_{\delta})) $ converges to $T$ in $L^1(d\P)$. Because $\mathcal{S}_{\delta}$ is a closed set, $\Gamma_0 \left([0,T] \times \mathcal{S}_\delta \right) =  T \textrm{ a.s.}$

Let $\pi$ be the $\mathcal{P}\left( \mathcal{M}_F(\X) \right)$-valued random variable defined by $\pi(A) = \Gamma_0([0,T] \times A)/T$ for any $A \in \mathcal{B}\left( \mathcal{M}_F(\X) \right).$ Then $\pi \left( \mathcal{S}_{\delta} \right) = 1$ and hence $\pi$ satisfies \eqref{support} almost surely. Furthermore
$\int_{\mathcal{M}_F(\X)} \mathbb{B}  F_f (\mu)\pi(d\mu) = 0$ for all $F_f \in \mathbb{F}^{2}_b$, almost surely. Therefore using Proposition \ref{prop:identification} proves this corollary.
\end{proof}

\subsubsection{Fixation probabilities}\label{subsect2}

We have seen in Corollary \ref{corr:identification} that the behavior of a dimorphic population is known provided we know which trait fixates (see \eqref{supportprobability}). Following Champagnat et al. \cite{champagnat3,champagnatferrieremeleard2}, we can answer this question by using couplings with branching processes. We consider the process $Y^K$ started with a monomorphic or dimorphic initial condition and examine the fixation probabilities in a time interval of order $1/K u_K$.\\

We begin with some notation. For any $x \in \X$ and $\epsilon > 0 $ let
\begin{align}
\label{measurevaluednbhd}
\mathcal{N}_{\epsilon}(x) = \left\{ \mu \in \mathcal{M}_F(\X) : \textrm{supp}(\mu)=\{x\} \textrm{ and } \langle \mu , 1 \rangle \in \left[ \widehat{n}_x -\epsilon, \widehat{n}_x+\epsilon \right] \right\}.
\end{align}
Let $K\in \N^*$, $x,y\in \X$. For the process $Y^K$ of Definition \ref{def:YKL0} with initial condition $Y^K(0)=(z_1^K/K)\delta_x+(z_2^K/K)\delta_y$,  we define the $\N$-valued processes $N^K_1$ and $N^K_2$ by
$N^K_1(t) = K \langle  Y^K(t) ,\ind_{\{x\}} \rangle$ and $N^K_2(t) = K \langle Y^K(t), \ind_{\{y\}} \rangle$. Then $\{N^K(t) = (N^K_1(t),N^K_2(t)) : t \geq 0\}$ is the $\N \times \N$-valued Markov jump process with transition rates given by
\begin{equation}\begin{array}{cc}
m b_1 := m b(x)  & \textrm{ from } (m,n) \textrm{ to }  (m+1,n) \\
n b_2 := n b(y)  & \textrm{ from } (m,n) \textrm{ to }  (m,n+1) \\
m\,d_1(m,n):=m \left( d(x) + \alpha(x,x) \frac{m}{K} + \alpha(x,y) \frac{n}{K}  \right) &  \textrm{ from } (m,n) \textrm{ to }  (m-1,n) \\
n\, d_2(m,n):=n \left( d(y) + \alpha(y,x) \frac{m}{K} + \alpha(y,y) \frac{n}{K}  \right) &  \textrm{ from } (m,n) \textrm{ to }  (m,n-1) .
\end{array}
\label{transitions}\end{equation}
and with initial state $(N^K_1(0),N^K_2(0)) = (z^K_1, z^K_2)$. Notice that $d_1$, $d_2$ are increasing functions in $n$ and $m$.\\
Let also $\mathcal{S}_K = [A_K,B_K) \times [C_K,D_K)$ be a subset of $\R_{+}^2$ and define
\begin{align}
\label{extitimeforsk}
T_{\mathcal{S}_K} = \inf \left\{ t \geq 0 :  N^K(t) \notin \mathcal{S}_K \right\}.
\end{align}
The results at the end of this section require comparisons with branching processes, which we recall in the next proposition whose proof is provided in the Appendix.

\begin{proposition}
\label{proposition:couplingwithbranchingprocesses2}
For each $K \in \N^*$ let $A_K$, $B_K$, $C_K$, $D_K\in \R_+$ and let $\mathcal{S}_K$ and $(N^K_1,N^K_2)$ be defined as above, with $N^K(0) \in \mathcal{S}_K$.
For parts (A) and (B), assume that $\eta >0$, $\epsilon \in (0,1)$ are such that $[K\eta(1-\epsilon),K\eta(1+\epsilon)] \subset [A_K,B_K)$. Let
\begin{align*}
d^{-}_1 (\eta,\epsilon) =  & \inf_{ K \in \N^*}  \inf\{ d_1(m,n)\ :\ (m,n)\in [K\eta(1-\epsilon),K\eta(1+\epsilon)] \times [C_K,D_K) \}\\
d^{+}_1 (\eta,\epsilon) =  & \sup_{K \in \N^*}   \sup\{ d_1(m,n)\ :\ (m,n)\in [K\eta(1-\epsilon),K\eta(1+\epsilon)] \times [C_K,D_K) \}.
\end{align*}
\begin{itemize}
\item[(A)] If $b_1 < d^{-}_1(\eta,\epsilon)$ and $N^K_1(0) \leq K \eta$ then $N^K_1$ exists $[A_K,B_K)$ by $A_K$ a.s.: for any $T>0$
\begin{align*}
\lim_{K \to \infty} \P \left( T_{\mathcal{S}_K} \leq \frac{T}{K u_K} , N^K_1 \left( T_{\mathcal{S}_K} \right) \geq B_K \right) = 0.
\end{align*}
\item[(B)] If $b_1 > d^{+}_1(\eta,\epsilon)$ and $N^K_1(0) \geq K\eta$ then $N^K_1$ exists $[A_K,B_K)$ by $B_K$ a.s.: for any $T>0$
\begin{align*}
\lim_{K \to \infty} \P \left( T_{\mathcal{S}_K} \leq \frac{T}{K u_K} , N^K_1 \left( T_{\mathcal{S}_K} \right) < A_K \right) = 0.
\end{align*}
\end{itemize}
Now assume that there exist $\epsilon_1,\epsilon_2 \in (0,1)$ and $\eta_1 < \eta_2$ such that $[K\eta_i(1-\epsilon_i),K\eta_i(1+\epsilon_i)] \subset [A_K,B_K)$ for $i=1,2$ and $d^{+}_1 (\eta_1,\epsilon_1)< b_1 < d^{-}_1 (\eta_2,\epsilon_2)$. Also suppose that $N^K_1(0) \in [K \eta_1,K \eta_2]$.
\begin{itemize}
\item[(C)] Then for any $T>0$
\begin{align*}
\lim_{K \to \infty} \P \left( T_{\mathcal{S}_K} \leq \frac{T}{K u_K} , N^K_1 \left( T_{\mathcal{S}_K} \right) \notin [A_K,B_K) \right) = 0.
\end{align*}
\end{itemize}
Let
$[C_K,D_K) = [1, K\epsilon )$ for some $\epsilon>0$ and define $d^{-}_2 (\epsilon),d^{+}_2 (\epsilon)$
\begin{multline*}
d^{-}_2 (\epsilon):= \inf_{K \in \N^*}\inf \{d_2(m,n) \ : \ (m,n)\in  [A_K,B_K) \times [1, K\epsilon ) \} \\
\leq \sup_{K \in \N^*}\sup\{d_2(m,n)\ : \ (m,n)\in [A_K,B_K)\times [1, K\epsilon ) \} := d^{+}_2(\epsilon).
\end{multline*}
Let $t_K$ be any $\N^{*}$-valued sequence such that $\log K \ll t_K \ll \frac{1}{Ku_K}$
\begin{itemize}
\item[(D)] If $b_2 < d^{-}_2(\epsilon)$ and $N^K_2(0) \leq K \epsilon/2$ then
\begin{align*}
\lim_{K \to \infty} \P \left( T_{\mathcal{S}_K} \leq t_K , N^K_2 \left( T_{\mathcal{S}_K} \right) = 0 \right) = 1.
\end{align*}
\item[(E)] If $b_2 > d^{+}_2(\epsilon)$ and $N^K_2(0) = 1$ then
\begin{align*}
 1 - \frac{d^{+}_2(\epsilon)}{b_2} \leq & \lim_{K \to \infty} \P \left( T_{\mathcal{S}_K} \leq t_K , N^K_2 \left( T_{\mathcal{S}_K} \right) \geq \epsilon K \right) \leq 1 - \frac{d^{-}_2(\epsilon)}{b_2}\\
\mbox{ and }\qquad  \frac{d^{-}_2(\epsilon)}{b_2} \leq & \lim_{K \to \infty} \P \left( T_{\mathcal{S}_K} \leq t_K , N^K_2 \left( T_{\mathcal{S}_K} \right) = 0 \right) \leq  \frac{d^{+}_2(\epsilon)}{b_2}.
\end{align*}
\end{itemize}
\end{proposition}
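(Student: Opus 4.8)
The plan is to establish all five parts by the same device: couple each coordinate of $N^K=(N^K_1,N^K_2)$ with linear birth-death (branching) processes and then read off the conclusion from classical facts about the latter. The structural features that make this work are that each coordinate has a \emph{constant} per-capita birth rate ($b_1$ or $b_2$), while the per-capita death rates $d_1(m,n)$ and $d_2(m,n)$ are nondecreasing in both arguments and are pinched between the constants $d_i^{\pm}$ on the relevant rectangles. Consequently, on any region where $d_i^-\le d_i(\cdot,\cdot)\le d_i^+$, one may construct on a common probability space a monotone coupling under which, up to the exit time $T_{\mathcal{S}_K}$, the coordinate $N^K_i$ is sandwiched as $\underline Z_i(t)\le N^K_i(t)\le\overline Z_i(t)$ between a branching process $\underline Z_i$ with per-capita rates $(b_i,d_i^+)$ and a branching process $\overline Z_i$ with per-capita rates $(b_i,d_i^-)$. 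I would first record, as a preliminary lemma, the standard facts needed: a linear birth-death process with per-capita rates $(b,d)$ becomes extinct with probability $\min(1,d/b)$; if $b<d$ it is extinct within time of order $\log N$ when started from $N$ individuals; and if $b>d$ it survives with probability $1-d/b$ and, on the survival event, $e^{-(b-d)t}$ times its value converges to a strictly positive limit, so it reaches any level $\epsilon K$ within time of order $\log K$.

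For parts (A) and (B) I would apply the sandwich on the strip $[K\eta(1-\epsilon),K\eta(1+\epsilon)]\times[C_K,D_K)$, on which $d_1^-(\eta,\epsilon)\le d_1(m,n)\le d_1^+(\eta,\epsilon)$ by definition. In (A) the hypothesis $b_1<d_1^-(\eta,\epsilon)$ makes the dominating process $\overline Z_1$ subcritical, and the probability that a subcritical branching process started at level $\le K\eta$ ever climbs to the upper level $B_K\ge K\eta(1+\epsilon)$ is exponentially small in $K$ by a gambler's-ruin estimate on the embedded random walk; hence $N^K_1$ must leave $[A_K,B_K)$ through $A_K$. Part (B) is the mirror image, with $b_1>d_1^+(\eta,\epsilon)$ making the dominated process $\underline Z_1$ supercritical and the same estimate ruling out a descent to $A_K$ before $B_K$ is reached. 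Part (C) then follows by combining the two one-sided estimates: on the strip around $K\eta_1$ the dynamics is supercritical, so a downward crossing is unlikely by the (B)-argument, while on the strip around $K\eta_2$ it is subcritical, so an upward crossing is unlikely by the (A)-argument; starting in $[K\eta_1,K\eta_2]$ the process can leave $[A_K,B_K)$ only by crossing one of these strips the unfavourable way, and both probabilities vanish as $K\to\infty$.

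For parts (D) and (E) the relevant rectangle $[A_K,B_K)\times[1,K\epsilon)$ coincides with $\mathcal{S}_K$, so the sandwich for $N^K_2$ is valid exactly up to $T_{\mathcal{S}_K}$, and I would track the second coordinate. In (D), $b_2<d_2^-(\epsilon)$ makes $\overline Z_2$ subcritical; started from $N^K_2(0)\le K\epsilon/2$ it is extinct within a time of order $\log K$, which is $\ll t_K$ by hypothesis, and a further gambler's-ruin bound shows it stays below $K\epsilon$ throughout, so $N^K_2\le\overline Z_2$ hits $0$ before $t_K$ without ever touching the top level $K\epsilon$, giving $\P(T_{\mathcal{S}_K}\le t_K,\ N^K_2(T_{\mathcal{S}_K})=0)\to1$. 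In (E), $b_2>d_2^+(\epsilon)$ makes both bounding processes supercritical. For the lower bound on the invasion probability, $\underline Z_2$ started from a single mutant reaches $\epsilon K$ within time of order $\log K\ll t_K$ with probability tending to $1-d_2^+/b_2$, and whenever it does, $N^K_2\ge\underline Z_2$ exits through $\epsilon K$ before $t_K$; for the upper bound, $N^K_2$ can reach $\epsilon K$ only if $\overline Z_2$ does, which forces $\overline Z_2$ to survive, an event of probability $1-d_2^-/b_2$. The two bounds for the extinction probability are obtained symmetrically by exchanging the roles of $\underline Z_2$ and $\overline Z_2$, using that a process started from one individual that does go extinct does so within a time $\ll t_K$; the gap between the bounds is genuine and reflects that $d_2^+(\epsilon)-d_2^-(\epsilon)$ does not vanish.

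The step I expect to be the main obstacle is the joint control of time scales and coupling validity. One must check that each bounding branching process realizes its asymptotic behaviour—extinction, or exponential growth to the macroscopic level $\epsilon K$—within the prescribed window, and this is precisely where the hypothesis $\log K\ll t_K\ll 1/(Ku_K)$ (respectively the horizon $T/(Ku_K)$) enters: the lower bound $\log K\ll t_K$ guarantees that a subcritical process started from a population of size $O(K)$, or a supercritical one from a single individual, completes its extinction or its climb before the deadline, while the coupling is only legitimate as long as the process remains in the rectangle. The most delicate point is reconciling the tracked coordinate with the untracked one: in parts (D) and (E) the conclusion concerns the exit governed by $N^K_2$, so one must ensure that the first coordinate does not itself force an exit of $\mathcal{S}_K$ on the short time scale $t_K$, which in the application is secured by the confinement of parts (A)--(C) on the longer horizon. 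Finally, obtaining \emph{matching} two-sided bounds in (E) requires running both comparison processes simultaneously and verifying that the residual event on which $N^K_2$ reaches neither $0$ nor $\epsilon K$ by time $t_K$ is asymptotically negligible.
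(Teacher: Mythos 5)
Your overall strategy---sandwiching each coordinate between linear birth-death processes and importing classical branching-process facts---is exactly the paper's, and your treatment of parts (D) and (E) (including the need to invoke part (C) to rule out an exit through the first coordinate, and the reason the two-sided bounds in (E) do not match) is essentially the paper's argument. The gap is in parts (A) and (B), and it propagates to (C). You write that the dominating subcritical process started at level $\le K\eta$ has exponentially small probability of ever climbing to $K\eta(1+\epsilon)$, ``hence $N^K_1$ must leave $[A_K,B_K)$ through $A_K$.'' This does not follow, because the domination $N^K_1(t)\le \overline Z_1(t)$ is only valid while $N^K$ remains in the strip $[K\eta(1-\epsilon),K\eta(1+\epsilon)]\times[C_K,D_K)$ --- that is the only region on which $d_1\ge d_1^-(\eta,\epsilon)$ --- and this strip is a \emph{proper} subset of $\mathcal{S}_K$. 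The typical exit of the strip is downward through $K\eta(1-\epsilon)$, which is still inside $[A_K,B_K)$; the process then drifts back up (below the strip $d_1$ is smaller, so the comparison reverses) and makes a fresh attempt. A single gambler's-ruin estimate bounds one attempt; it does not bound the probability that \emph{some} attempt among the many occurring during the long horizon $T/(Ku_K)$ succeeds, and $1/(Ku_K)$ may grow like $e^{cK}$ for small $c$, so the per-attempt bound $e^{-cK}$ cannot simply be multiplied by an uncontrolled number of attempts.

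The paper closes this gap with an explicit renewal-type argument that you would need to supply: let $\rho_K$ count the upward passages of $N^K_1$ from $K\eta(1-\epsilon)$ to $K\eta$ before time $T_{\mathcal{S}_K}$, note that an upward exit within the first $n$ passages has probability at most $n e^{-cK}$, take $n_K=[1/(Ku_K)]^2$, and then show that $n_K$ passages cannot all fit inside the window $[0,T/(Ku_K)]$: each passage dominates in duration the time $\bar\sigma_{K,i}$ for a pure birth process ${\bf P}(b_1,0,K\eta(1-\epsilon))$ to reach $K\eta$, and the first and second moment bounds of Lemma \ref{branchingprocessbehaviourlemma}(B) together with Chebyshev's inequality give $\P\bigl(\sum_{i=1}^{n_K}\bar\sigma_{K,i}\le T/(Ku_K)\bigr)\to 0$. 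Finally $n_K e^{-cK}\to 0$ uses the upper half of Assumption \ref{timescaleseparation}, $1/(Ku_K)\ll e^{cK}$, which your write-up never invokes. Without this counting of excursions and the accompanying time-per-excursion lower bound, the conclusion of (A) (and symmetrically (B), hence (C)) is not established.
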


Using the results of Proposition \ref{proposition:couplingwithbranchingprocesses2}, we can deduce the following result for the fixation probabilities for a branching process $Y^K$ with a dimorphic initial condition and for the behavior at times of order $1/Ku_K$.
\begin{proposition}
\label{main_proposition}
For each $K \in \N^*$ let $\{Y^K(t) : t \geq 0\}$ be a process with generator $L^K_0$. Pick two traits $x,y \in \X$ initially present in the population and two $\N^*$-valued sequences $\{z^{K}_{1} : K \in \N^*\}$ and $\{z^{K}_{2} : K \in \N^* \}$ that give the sizes of the populations of trait $x$ and $y$. Assume that
\[ Y^K(0) = \frac{z^K_1}{K} \delta_{x} + \frac{z^K_2}{K} \delta_{y}.\]
Then we have the following for any $T>0$.
\begin{itemize}
\item[(A)] A monomorphic population with a sufficiently large size does not die:\\
Suppose that for some $\epsilon>0$, $z^K_1 \geq K \epsilon$ and $z^K_2 = 0$ for each $K \in \N^*$. Then for some $\delta > 0$
\begin{align}
\label{main_proposition_parta}
 \lim_{K \to \infty} \P \left( \exists t \in \left[ 0 , \frac{T}{K u_K} \right],\  Y^K_t\{x\} < \delta \right) = 0.
\end{align}
\item[(B)] A monomorphic population with a size around $\widehat{n}_x$ remains there: \\
Suppose that for some $\epsilon>0$, $z^K_1 \in [ K(\widehat{n}_x -\epsilon), K(\widehat{n}_x+\epsilon)]$ and $z^K_2 = 0$ for each $K \in \N^*$. Then
\begin{align}
\label{main_proposition_parta}
 \lim_{K \to \infty} \P \left(\exists t \in \left[ 0 , \frac{T}{K u_K} \right],\ Y^K(t) \notin \mathcal{N}_{2 \epsilon}(x)\right) = 0.
\end{align}
\item[(C)] A favorable mutant with a non-negligible size in a resident population near equilibrium fixates a.s.:\\
Suppose that $\mathrm{Fit}(y,x)> 0 $ and for some $\epsilon>0$, $z^K_1 < K (\widehat{n}_x+ \epsilon)$ and $z^K_2 >  K \epsilon $ for all $K \in \N^*$. Then there exists an $\epsilon_0>0$ such that if $\epsilon <\epsilon_0$ then
\begin{align}
\label{main_proposition_partb}
 \lim_{K \to \infty} \P \left(\exists t \in \left[ 0 , \frac{T}{K u_K} \right],\ Y^K_t\{y\}    < \frac{\epsilon}{2}  \right) = 0.
\end{align}
\end{itemize}
\noindent For parts (D) and (E), we consider a small mutant population in a resident population near its equilibrium. We assume that for some $\epsilon > 0$ we have $z^{K}_1 \in [ K(\widehat{n}_x -\epsilon), K(\widehat{n}_x+\epsilon)]$ and $z^K_2 \in [1, K \epsilon)$ for all $K \in \N^*$. Let $t_K$ be any $\N^{*}$-valued sequence such that $\log K \ll t_K \ll 1/K u_K$. Let $\mathcal{S}_K=[K(\widehat{n}_x-2\epsilon),K( \widehat{n}_x+2\epsilon )\times [1,2K \epsilon)$ and let $T_{\mathcal{S}_K}$ be the associated stopping time \eqref{extitimeforsk}.
\begin{itemize}
\item[(D)] An unfavorable mutant dies out in a time $t_K$. \\
Let $\mathrm{Fit}(y,x) < 0$. There exists an $\epsilon_0 > 0$ such that if $\epsilon < \epsilon_0$ then
\begin{align}
\label{main_proposition_partc}
\lim_{K \to \infty} \P \left(  T_{\mathcal{S}_K} \leq t_K , Y^K_{ T_{\mathcal{S}_K}}\{y\} = 0 \right) = 1.
\end{align}
\item[(E)] A favorable mutant invades with probability $\mathrm{Fit}(y,x)/b(y)$. \\
Let $\mathrm{Fit}(y,x)>0$ and $z^{K}_{2} = 1$ for all $K \in \N^*$. Then there exist positive constants $c,\epsilon_0$ such that for all $\epsilon < \epsilon_0$ we have
\begin{align}
\label{main_proposition_partd1}
& \lim_{K \to \infty} \left|\P \left(   T_{\mathcal{S}_K} \leq t_K , Y^K_{T_K(\epsilon)}\{y\} \geq 2 \epsilon \right)- \frac{ \mathrm{Fit}(y,x) }{b(y)} \right|\leq c \epsilon,\\
\label{main_proposition_partd2}
\mbox{and }\qquad & \lim_{K \to \infty} \left| \P \left(  T_{\mathcal{S}_K} \leq t_K , Y^K_{T_K(\epsilon)}\{y\} = 0 \right) - \left( 1- \frac{\mathrm{Fit}(y,x)}{b(y)}\right) \right|\leq  c \epsilon.
\end{align}

\end{itemize}
\end{proposition}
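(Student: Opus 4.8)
The plan is to derive all five statements from the branching-process comparisons of Proposition~\ref{proposition:couplingwithbranchingprocesses2}, applied to the coordinate processes $N^K_1$ and $N^K_2$ on well-chosen rectangles $\mathcal{S}_K$, after translating the sign of the fitness and the position relative to $\widehat{n}_x$ into the supercriticality/subcriticality conditions comparing $b_i$ with $d_i^\pm$. The computation that drives everything is the following. With $n=0$ one has $b_1-d_1(m,0)=b(x)-d(x)-\alpha(x,x)m/K$, which is positive for $m/K<\widehat{n}_x$ and negative for $m/K>\widehat{n}_x$; thus the resident is supercritical below $\widehat{n}_x$, subcritical above it, and critical at $\widehat{n}_x$. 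Likewise, when the mutant is rare and the resident sits near $\widehat{n}_x$, $b_2-d_2(m,n)=b(y)-d(y)-\alpha(y,x)m/K-\alpha(y,y)n/K\to \mathrm{Fit}(y,x)$ as $m/K\to\widehat{n}_x$ and $n/K\to 0$, so the mutant is supercritical precisely when $\mathrm{Fit}(y,x)>0$. Throughout I restrict to the event that $\langle Y^K(t),1\rangle$ stays below a fixed constant, which by Lemma~\ref{lemma:boundednessofmass} and Markov's inequality has probability arbitrarily close to $1$ uniformly in $K$; this makes the suprema $d_i^\pm$ finite and confines the $m$-variable to a compact set.

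For parts (A) and (B) the mutant is absent ($z^K_2=0$), so $N^K_2\equiv 0$ and only $N^K_1$ matters. I would apply part (C) of Proposition~\ref{proposition:couplingwithbranchingprocesses2} with $\eta_1<\widehat{n}_x<\eta_2$: at $\eta_1$ the resident is supercritical ($b_1>d_1^+$) and at $\eta_2$ it is subcritical ($b_1<d_1^-$), so $N^K_1$ cannot leave $[A_K,B_K)$ on the time scale $T/Ku_K$. For part (A) I take $A_K=K\delta$ with $\delta<\widehat{n}_x\wedge\epsilon$ and $B_K$ above the mass bound, forcing $N^K_1(t)\geq K\delta$. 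For part (B) I take $[A_K,B_K)=[K(\widehat{n}_x-2\epsilon),K(\widehat{n}_x+2\epsilon))$ and $\eta_i=\widehat{n}_x\mp\epsilon$; the super/subcriticality inequalities hold once $\epsilon$ is small, which confines $Y^K(t)$ to $\mathcal{N}_{2\epsilon}(x)$.

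Parts (D) and (E) follow from parts (D) and (E) of Proposition~\ref{proposition:couplingwithbranchingprocesses2} applied to the mutant coordinate $N^K_2$ on $\mathcal{S}_K$. When $\mathrm{Fit}(y,x)<0$, the rate computation gives $b_2<d_2^-(\epsilon)$ for $\epsilon$ small, and part (D) yields extinction of the mutant before time $t_K$, proving \eqref{main_proposition_partc}. When $\mathrm{Fit}(y,x)>0$ and $z^K_2=1$, the same computation gives $b_2>d_2^+(\epsilon)$ for $\epsilon$ small, and part (E) places the invasion and extinction probabilities in $[1-d_2^+/b_2,\,1-d_2^-/b_2]$ and $[d_2^-/b_2,\,d_2^+/b_2]$. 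It then remains to let $\epsilon\to0$: since the $m$-range shrinks to $\{\widehat{n}_x\}$ and the $n$-range to $\{0\}$, one has $|d_2^\pm(\epsilon)-(d(y)+\alpha(y,x)\widehat{n}_x)|\leq c\epsilon$, whence $|(1-d_2^\pm(\epsilon)/b_2)-\mathrm{Fit}(y,x)/b(y)|\leq c\epsilon$, which is exactly the $c\epsilon$-error in \eqref{main_proposition_partd1}--\eqref{main_proposition_partd2} after identifying the exit time with $T_{\mathcal{S}_K}$.

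The genuinely delicate case is part (C), where both coordinates are macroscopic and move simultaneously, so no single one-dimensional comparison suffices; this is the step I expect to be the main obstacle. The plan is a two-step confinement on a rectangle $\mathcal{S}_K=[0,K(\widehat{n}_x+C\epsilon))\times[K\epsilon/2,KB_y)$. First, at $m/K=\widehat{n}_x+C\epsilon$ the resident is subcritical (all the more so in the presence of the mutant), so part (A) of Proposition~\ref{proposition:couplingwithbranchingprocesses2} applied to $N^K_1$ keeps $m/K$ below $\widehat{n}_x+C\epsilon$. Second, on this confined set and for $n/K$ near $\epsilon/2$, the bound $b_2-d_2(m,n)\geq\mathrm{Fit}(y,x)-c\epsilon>0$ shows the mutant is uniformly supercritical near its lower boundary; applying part (C) of Proposition~\ref{proposition:couplingwithbranchingprocesses2} with the roles of the two coordinates exchanged to $N^K_2$, which starts macroscopic at $z^K_2>K\epsilon$, the probability that $N^K_2$ ever returns to $K\epsilon/2$ before leaving $\mathcal{S}_K$ is negligible. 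The hard part is to make this coupling rigorous while both coordinates evolve: one must argue that the joint process can only exit $\mathcal{S}_K$ through the resident's lower face or the mutant's upper face (the ``good'' exits) and never through $\{n=K\epsilon/2\}$. This is where Assumption~\ref{hyp:taux}(B) enters, since $\mathrm{Fit}(y,x)>0$ forces the interior equilibrium $(\widetilde{n}_x,\widetilde{n}_y)$ of \eqref{ode} out of the positive quadrant, so the Lotka--Volterra drift points away from the bad boundary throughout $\mathcal{S}_K$ and the stochastic process inherits this through the branching comparisons. I expect the bookkeeping of the stopping times at the four faces of $\mathcal{S}_K$, together with the choice of $C$ and $\epsilon_0$ making all criticality inequalities simultaneously strict, to be the most technical ingredient.
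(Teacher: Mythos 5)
Your proposal follows essentially the same route as the paper: every part is reduced to one-dimensional branching comparisons on rectangles via Proposition \ref{proposition:couplingwithbranchingprocesses2}, with the same criticality computations ($b_1-d_1$ changes sign at $\widehat{n}_x$; $b_2-d_2\to\mathrm{Fit}(y,x)$ near $(\widehat{n}_x,0)$) and the same $c\epsilon$ bookkeeping for part (E). Three remarks. First, the step you flag as the main obstacle in part (C) is not actually an obstacle given the machinery you are already invoking: the couplings of Proposition \ref{proposition:couplingwithbranchingprocesses} are valid up to the \emph{joint} exit time $T_{\mathcal{S}_K}$, with the death rates bounded uniformly over sub-rectangles in which the other coordinate ranges over its full allowed interval, so the paper simply takes $\mathcal{S}'_K=[0,K(\widehat{n}_x+2\epsilon))\times[K\epsilon/2,\infty)$, bounds the bad event by the union of the two bad exits (resident above, mutant below), and kills each with parts (A) and (B) of Proposition \ref{proposition:couplingwithbranchingprocesses2}. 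Only $\mathrm{Fit}(y,x)>0$ is used there; the appeal to Assumption \ref{hyp:taux}(B) and the Lotka--Volterra phase portrait is unnecessary for this proposition (that assumption is consumed in Proposition \ref{prop:identification}, not here). Second, the preliminary restriction to the event that $\langle Y^K(t),1\rangle$ stays below a fixed constant is both unneeded (the rectangles already confine the coordinates wherever a supremum of $d_i$ is taken) and not directly justified: Lemma \ref{lemma:boundednessofmass} gives moment bounds at each fixed time, not a maximal inequality over $[0,T/Ku_K]$. Third, in part (A) the hypothesis gives only $z^K_1\geq K\epsilon$ with no upper bound, so the two-sided confinement via part (C) of Proposition \ref{proposition:couplingwithbranchingprocesses2} (which requires $N^K_1(0)\leq K\eta_2$) does not apply directly; the paper instead takes $\mathcal{S}'_K=[K\delta,\infty)\times\{0\}$ and uses only the one-sided comparison at the lower boundary, which is all the statement requires. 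None of these points changes the substance: with those adjustments your argument is the paper's.
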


\begin{proof}
Let us prove part (A). Since $z^K_2 = 0$, $N^K_2(t) = 0$ for all $t\geq 0$ and $K \in \N^*$. Note that $b(x)-d(x) >0$ and so we can pick a $\delta >0$ such that $b(x)- d(x) - 2 \alpha(x,x) \delta > 0$ and $2 \delta < \epsilon$. Let $\mathcal{S}'_K = [K \delta , \infty) \times \{0\}$ and let $T_{\mathcal{S}'_K}$ be given by \eqref{extitimeforsk}. Then
\begin{align*}
 \lim_{K \to \infty} \P\left( Y^K_t\{x\}   < \delta \textrm{ for some } t \in \left[ 0 , \frac{T}{K u_K} \right]\right)
&\leq \lim_{K \to \infty} \P \left( T_{\mathcal{S}'_K} \leq \frac{T}{K u_K}\right)\\
& = \lim_{K \to \infty} \P \left( T_{\mathcal{S}'_K} \leq \frac{T}{K u_K} , N^K_1\left( T_{\mathcal{S}'_K} \right) < K \delta\right).
\end{align*}
The last equality above follows from the fact that the only way to exit the set $\mathcal{S}'_K$ is by having $N^K_1$ go below $K \delta$.
Observe that on the set $[K \delta, 2 K \delta] \times \{0\}$, the supremum of $d_1$ is bounded above by $d(x) + 2 \alpha(x,x) \delta$ which is less than $b(x)$. Therefore by part (B) of Proposition \ref{proposition:couplingwithbranchingprocesses2}:
\begin{align*}
\lim_{K \to \infty} \P \left( T_{\mathcal{S}'_K} \leq \frac{T}{K u_K} , N^K_1\left( T_{\mathcal{S}'_K} \right) < K \delta\right) = 0,
\end{align*}
which proves part (A).\\

For part (B) we can choose $\epsilon$ sufficiently small such that $\widehat{n}_x > 2 \epsilon$. Let $\eta_1 = \widehat{n}_x - \epsilon$ and $\eta_2 = \widehat{n}_x + \epsilon$. We can find $\epsilon_1,\epsilon_2>0$ such that $[\eta_1(1-\epsilon_1), \eta_1(1+\epsilon_1)] \subset [\widehat{n}_x - 2\epsilon, \widehat{n}_x)$, $[\eta_2(1-\epsilon_2), \eta_2(1+\epsilon_2)] \subset (\widehat{n}_x, \widehat{n}_x + 2\epsilon]$ and on the set $[K \eta_1(1-\epsilon_1), K \eta_1 (1+\epsilon_1)] \times \{0\}$ the supremum of $d_1$ is strictly below $b(x)$ while on the $[K \eta_2(1-\epsilon_2), K \eta_2 (1+\epsilon_2)] \times \{0\}$ the infimum of $d_1$ is strictly above $b(x)$. Let $\mathcal{S}'_K = [K (\widehat{n}_x - 2\epsilon) , K(\widehat{n}_x + 2\epsilon)) \times \{0\}$ and let $T_{\mathcal{S}'_K}$ be given by \eqref{extitimeforsk}. From part (B) of Proposition \ref{proposition:couplingwithbranchingprocesses2} we get
\begin{align*}
\lim_{K \to \infty} \P \left( T_{\mathcal{S}'_K} \leq \frac{T}{K u_K} , N^K_1\left( T_{\mathcal{S}'_K} \right) \notin [K (\widehat{n}_x - 2\epsilon) , K(\widehat{n}_x + 2\epsilon)) \right) = 0.
\end{align*}
Observe that $\textrm{supp}(Y^K(t)) = \{x\}$ for all $t \geq 0$. Hence this limit proves part (B).\\

For part (C) note that $\mathrm{Fit}(y,x)>0$. We can choose $\epsilon_0>0$ such that $d(y) + \alpha(y,x) (\widehat{n}_x + 2 \epsilon_0 ) + 2 \alpha(y,y)\epsilon_0 = b(y)$. Now let $\epsilon < \epsilon_0$ and assume that $z^K_1 < K (\widehat{n}_x+\epsilon)$ and $z^K_2 >  K \epsilon $ for all $K \in \N^*$, as stated in the proposition. Define the set $\mathcal{S}'_K = [ 0 , K (\widehat{n}_x + 2 \epsilon) ) \times [K \epsilon /2,\infty)$ and let $T_{\mathcal{S}'_K}$ be given by \eqref{extitimeforsk}. It is easy to see that
\begin{multline*}
 \lim_{K \to \infty} \P\left(   Y^K_t\{y\}  < \frac{\epsilon}{2}  \textrm{ for some } t \in \left[ 0 , \frac{T}{K u_K} \right] \right)
\\
\leq \lim_{K \to \infty}  \Big[  \P \Big( T_{\mathcal{S}'_K} \leq \frac{T}{K u_K}, N^K_1\left( T_{\mathcal{S}'_K} \right) \geq K (\widehat{n}_x + 2 \epsilon)\Big)
+ \P \Big( T_{\mathcal{S}'_K} \leq \frac{T}{K u_K},N^K_2\left( T_{\mathcal{S}'_K} \right)<  \frac{K\epsilon}{2}\Big)\Big].
\end{multline*}
On the set $[K (\widehat{n}_x + \epsilon/2), K (\widehat{n}_x + 3\epsilon/2) ] \times [K \epsilon/2,\infty)$, the infimum of $d_1$ is greater that $b(x)$. Part (A) of Proposition \ref{proposition:couplingwithbranchingprocesses2} shows that the first limit on the right is $0$.
On the set $[ 0 , K (\widehat{n}_x + 2 \epsilon) ) \times [K \epsilon /2 , 3 K \epsilon /4 )$, the supremum of $d_2$ is less than $b(y)$. We can use part(B) of Proposition \ref{proposition:couplingwithbranchingprocesses2}, to see that the second limit on the right is also $0$. This proves part (C).\\

For part (D), observe that $\mathrm{Fit}(y,x)<0$ and let $\epsilon_0>0$ satisfy $d(y)+ \alpha(y,x) (\widehat{n}_x- 2\epsilon_0) =  b(y)$. Pick an $\epsilon \in (0,\epsilon_0)$. On the set $[ K(\widehat{n}_x -2 \epsilon), K(\widehat{n}_x+ 2 \epsilon)] \times [1, 2 K \epsilon)$ the infimum of $d_2$ is greater than $b(y)$. Part (D) of Proposition \ref{proposition:couplingwithbranchingprocesses2} proves part (D).\\

For part (E), note that $\mathrm{Fit}(y,x)>0$ and let $\epsilon_0>0$ satisfy $d(y)+ \alpha(y,x) (\widehat{n}_x+ 2 \epsilon_0) + 2 \alpha(y,y) \epsilon_0  = b(y)$. Pick $\epsilon\in (0,\epsilon_0)$. On the set $[ K(\widehat{n}_x -2 \epsilon), K(\widehat{n}_x+ 2 \epsilon)] \times [1, 2 K \epsilon)$ the supremum of $d_2$ is less than $d^{+}_2(\epsilon) := d(y)+ \alpha(y,x) (\widehat{n}_x+ 2 \epsilon) + 2 \alpha(y,y) \epsilon$ and the infimum of $d_2$ is greater than $d^{-}_2(\epsilon) :=  d(y)+ \alpha(y,x) (\widehat{n}_x- 2 \epsilon)$. Both $d^{+}_2(\epsilon)$ and $d^{-}_2(\epsilon)$ are less than $b(y)$. Using part (E) of Proposition \ref{proposition:couplingwithbranchingprocesses2} proves part (E).
\end{proof}

From the preceding proposition, we can retrieve the state of the process on a large window $[\epsilon/Ku_K,\epsilon^{-1}/Ku_K]$ given the initial condition. This allows us to understand what will happen if we neglect the transitions and large time rare events.

\begin{corollary}
\label{maincorollary}
Let us consider the process $Y^K$ of Definition \ref{def:YKL0}.
\begin{itemize}
\item[(A)] Suppose that for some $x \in \X$, $\textrm{supp}(Y^K(0)) = \{x\}$ and $Y^K_0\{x\}>\epsilon$ for all $K \in \N^*$. Then
 \begin{equation} \lim_{K \to \infty} \P \left(  Y^K(t) \in  \mathcal{N}_{\epsilon}(x) \textrm{ for all } t \in \left[\frac{\epsilon}{K u_K}, \frac{\epsilon^{-1}}{Ku_K} \right]  \right) = 1.\label{desiredinequalityforE1}\end{equation}

\item[(B)] Suppose that for some $x,y \in \X$ such that $\mathrm{Fit}(y,x) < 0$, we have $\textrm{supp}(Y^K(0)) = \{x,y\}$ with $Y^K_0\{x\}\in  \left[ \widehat{n}_x -\epsilon, \widehat{n}_x+\epsilon \right]$ and $Y^K_0\{y\}<\epsilon$ for all $K \in \N^*$. Then
 \begin{equation*}\lim_{K \to \infty} \P \left(  Y^K(t) \in  \mathcal{N}_{\epsilon}(x) \textrm{ for all } t \in \left[\frac{\epsilon}{K u_K}, \frac{\epsilon^{-1}}{Ku_K} \right] \right) =  1.
\end{equation*}
\item[(C)]  Suppose that for some $x,y \in \X$ such that $\mathrm{Fit}(y,x) > 0$, we have $\textrm{supp}(Y^K(0)) = \{x,y\}$ with $Y^K_0\{x\}  \in \left[ \widehat{n}_x -\epsilon, \widehat{n}_x+\epsilon \right]$ and $ Y^K_0\{y\} = 1/K$ for all $K \in \N^*$. Then
\begin{align*}
& \lim_{\epsilon \to 0} \lim_{K \to \infty} \P \left(  Y^K(t) \in  \mathcal{N}_{\epsilon}(x) \textrm{ for all } t \in \left[\frac{\epsilon}{K u_K}, \frac{\epsilon^{-1}}{Ku_K} \right] \right) \\
&= 1-  \lim_{\epsilon \to 0} \lim_{K \to \infty} \P \left(  Y^K(t) \in  \mathcal{N}_{\epsilon}(y) \textrm{ for all } t \in \left[\frac{\epsilon}{K u_K}, \frac{\epsilon^{-1}}{Ku_K} \right] \right)\\
& =  1 - \frac{ \mathrm{Fit}(y,x)}{b(y)}.
\end{align*}
\end{itemize}

\end{corollary}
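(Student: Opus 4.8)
The plan is to derive all three parts from Proposition \ref{main_proposition} by splitting each trajectory, via the strong Markov property, into a short transient phase of length $o(1/Ku_K)$ during which the population reaches its eventual monomorphic configuration, followed by a long phase during which it is trapped near the corresponding equilibrium for the remainder of the window $[\epsilon/Ku_K,\epsilon^{-1}/Ku_K]$. Throughout I would use that $Ku_K\to 0$, so that the left endpoint $\epsilon/Ku_K$ of the window dwarfs any $O(1)$ (original-scale) transient time, and that taking $T=\epsilon^{-1}$ in Proposition \ref{main_proposition} covers the whole window in one stroke.

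For part (A) the process is monomorphic at $x$. First I would show that the mass enters the band $[\widehat{n}_x-\epsilon/2,\widehat{n}_x+\epsilon/2]$ before time $\epsilon/Ku_K$ with probability tending to $1$. Proposition \ref{main_proposition}(A) guarantees the mass never drops below some $\delta>0$ on the whole window. To drive it into the band I would compare with the one-sided exit estimates of Proposition \ref{proposition:couplingwithbranchingprocesses2}: below $\widehat{n}_x$ the death rate $d_1(m,0)=d(x)+\alpha(x,x)m/K$ is strictly below $b(x)$, so on any interval $[A_K,B_K)\subset[\delta K,(\widehat{n}_x-\epsilon/2)K)$ part (B) forbids exit through the bottom; symmetrically, above $\widehat{n}_x$ part (A) forbids exit through the top. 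Iterating over a finite ladder of such intervals squeezes the mass into $[\widehat{n}_x-\epsilon/2,\widehat{n}_x+\epsilon/2]$ in $O(1)$ original time, well before $\epsilon/Ku_K$. At the (random) entry time I would apply the strong Markov property together with Proposition \ref{main_proposition}(B), used with $\epsilon/2$ in place of $\epsilon$ and $T=\epsilon^{-1}$, which keeps the process in $\mathcal{N}_\epsilon(x)$ until $\epsilon^{-1}/Ku_K$, hence on the window.

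Parts (B) and (C) reduce to part (A) once the fate of the mutant $y$ is settled. For part (B), where $\mathrm{Fit}(y,x)<0$, Proposition \ref{main_proposition}(D) shows that within a time $t_K\ll 1/Ku_K$, chosen also $\ll \epsilon/Ku_K$, the mutant hits $0$ while the resident stays in the trapping set $\mathcal{S}_K$ near $\widehat{n}_x$; at the extinction time the population is monomorphic at $x$ with mass near $\widehat{n}_x$, and applying part (A) from there via the strong Markov property yields $\mathcal{N}_\epsilon(x)$ on the window. For part (C), where $\mathrm{Fit}(y,x)>0$ with a single mutant present, Proposition \ref{main_proposition}(E) splits the outcome by time $t_K$: with probability tending to $1-\mathrm{Fit}(y,x)/b(y)$, up to $c\epsilon$, the mutant dies, reducing to the monomorphic case at $x$; with the complementary probability it reaches size $\ge 2\epsilon K$, whereupon Proposition \ref{main_proposition}(C) shows it keeps a non-negligible size and, by the same one-sided comparisons as in part (A), drives the resident $x$ to extinction, so that part (A) applied to $y$ yields $\mathcal{N}_\epsilon(y)$ on the window. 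Taking $\lim_{K\to\infty}$ and then $\lim_{\epsilon\to 0}$ eliminates the $c\epsilon$ error and gives the announced probabilities $1-\mathrm{Fit}(y,x)/b(y)$ and $\mathrm{Fit}(y,x)/b(y)$.

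I expect the main obstacle to be the transient-to-equilibrium step underlying part (A): Proposition \ref{main_proposition} supplies non-extinction and trapping near $\widehat{n}_x$, but not convergence to $\widehat{n}_x$ from an arbitrary mass above $\delta$. Making the squeezing rigorous requires chaining the one-sided exit estimates of Proposition \ref{proposition:couplingwithbranchingprocesses2}(A),(B) over finitely many nested level-intervals and checking that each crossing, being that of a sub- or supercritical branching comparison on an $O(K)$ population, costs only $O(1)$ original time, so that the whole transient is $o(1/Ku_K)$ and in particular shorter than $\epsilon/Ku_K$. The same squeezing is reused in part (C) to expel the resident after the favorable mutant becomes macroscopic, where extra care is needed to coordinate the two coordinates of $(N^K_1,N^K_2)$ and to justify the interchange in the double limit.
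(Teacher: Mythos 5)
Your overall skeleton — strong Markov decomposition into a short transient plus a long trapping phase, Proposition \ref{main_proposition}(A),(B) for non-extinction and confinement, (D) for part (B), (E) together with the $\epsilon\to 0$ limit for the probabilities in part (C) — matches the paper's proof. The genuine gap is in the transient step, which you yourself flag as ``the main obstacle'' but do not resolve. You propose to drive the mass from an arbitrary level above $\delta$ into the band $[\widehat{n}_x-\epsilon/2,\widehat{n}_x+\epsilon/2]$ by chaining the one-sided exit estimates of Proposition \ref{proposition:couplingwithbranchingprocesses2}(A),(B) over a ladder of intervals, each crossing costing $O(1)$ time. But those estimates only control the \emph{direction} of exit (the conclusion is that $\P(T_{\mathcal{S}_K}\leq T/Ku_K,\ \mathrm{exit\ on\ the\ wrong\ side})\to 0$, which is consistent with the process never leaving the rung at all), and Lemma \ref{branchingprocessbehaviourlemma}(B) gives crossing-time moments only for pure birth or pure death processes. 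So the claim that the transient is $o(1/Ku_K)$ does not follow from the stated tools; you would need new quantitative estimates on the passage time of a sub/supercritical logistic process between macroscopic levels — precisely the kind of fine comparison the paper is constructed to avoid.

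The paper closes this gap with a different device: Corollary \ref{corr:identification}. Tightness of the occupation measure $\Gamma_0^K$ plus the identification of its limit via Proposition \ref{prop:identification} (the ``invasion implies substitution'' algebra) forces $\int_0^T F_f(Y^K(t/Ku_K))\,dt \Rightarrow T F_f(\widehat{n}_x\delta_x)$, whence the entry time $\sigma^K_\epsilon=\inf\{t: Y^K(t)\in\mathcal{N}_{\epsilon/2}(x)\}$ satisfies $Ku_K\sigma^K_\epsilon\to 0$ in probability with no drift analysis at all; the strong Markov property and Proposition \ref{main_proposition}(B) then finish part (A). The same averaging argument replaces your phase-plane step in part (C): after the mutant becomes macroscopic, Proposition \ref{main_proposition}(C) gives the lower bound needed for \eqref{supportprobability}, Corollary \ref{corr:identification} concentrates the occupation measure at $\widehat{n}_y\delta_y$, and part (B) of the corollary applied with the roles of $x$ and $y$ exchanged (using $\mathrm{Fit}(x,y)<0$, which follows from Assumption \ref{hyp:taux}(B)) shows the $i=2$ term vanishes. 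You should replace your ladder argument by an appeal to Corollary \ref{corr:identification}, or else supply the missing crossing-time lemma.
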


\begin{proof} Let us first consider part (A). Let $\epsilon > 0$ and $Y^K$ be the process of Definition \ref{def:YKL0}, with the initial condition stated in the statement of part (A). Proposition \ref{main_proposition} (A) implies that for some $\delta>0$
\begin{align*}
\lim_{K \to \infty} \P \left( Y^K_t\{x\} < \delta  \textrm{ for some } t \in \left[ 0 ,\frac{\epsilon^{-1}}{K u_K} \right]\right) = 0.
\end{align*}
From Corollary \ref{corr:identification} we know that for any $t \geq 0$ and $F_f\in \mathbb{F}^2_b$,
\[ \int_{0}^{t} \mathbb{F}_f \left( Y^K \left( \frac{s}{K u_K} \right) \right) ds \Rightarrow t F_f\left( \widehat{n}_x \delta_{x} \right)\]
 as $K\rightarrow +\infty$.
Hence if we define $\sigma^K_{\epsilon} = \inf \{ t \geq 0 : Y^K(t) \in  \mathcal{N}_{\epsilon/2}(x)\}$, then $K u_K\sigma^K_{\epsilon} \rightarrow 0$ in probability as $K \to \infty$. Now let the process $\{\tilde{Y}^K(t): t \geq 0\}$ be given by $\tilde{Y}^K_{\epsilon}(t) = Y^K_{\epsilon}(t+\sigma^K_{\epsilon})$.  By the strong Markov property, this process also has generator $L^K_0$. Moreover its initial state is inside $\mathcal{N}_{\epsilon/2}(x)$. Using part (B) of Proposition \ref{main_proposition} proves part (A).\\

For part (B), fix an $\epsilon>0$ and consider the process $Y^K$ with the initial condition specified in the statement. Let us consider the stopping time $T_{\mathcal{S}_K}$ associated with $\mathcal{S}_K=[ K(\widehat{n}_x - 2\epsilon), K(\widehat{n}_x + 2\epsilon))\times [K \epsilon,2K\epsilon)$ by \eqref{extitimeforsk} and a sequence $\{t_K\}$ as in Proposition \ref{main_proposition}. Since $\mathrm{Fit}(y,x) < 0$, thanks to Proposition \ref{main_proposition} (D):
\begin{multline}
 \lim_{K \to \infty} \P \left(  Y^K(t) \in  \mathcal{N}_{\epsilon}(x) \textrm{ for all } t \in \left[\frac{\epsilon}{K u_K}, \frac{\epsilon^{-1}}{Ku_K} \right] \right) \\
=  \lim_{K \to \infty} \P \left(  Y^K(t) \in  \mathcal{N}_{\epsilon}(x) \textrm{ for all } t \in \left[\frac{\epsilon}{K u_K}, \frac{\epsilon^{-1}}{Ku_K} \right] \ ; \ T_{\mathcal{S}_K}\leq t_K\ ; \ Y^K_{T_{\mathcal{S}_K}}\{y\}=0\right).\label{etape1}
\end{multline}
Let $\{ \tilde{Y}^K(t) : t \geq 0\}$ be the process given by $\tilde{Y}^K(t) = Y^K\left(T_{\mathcal{S}_{T_K}} + t  \right)$. By the strong Markov property, this process also has generator $L^K_{0}$. On the event $\{T_{\mathcal{S}_K}\leq t_K\ ; \ Y^K_{T_{\mathcal{S}_K}}\{y\}=0\}$, $\tilde{Y}^K(0)$ is such that $\supp(\tilde{Y}^K(0))=\{x\}$ and $\tilde{Y}^K_0\{x\} \in   [ \widehat{n}_x - 2\epsilon, \widehat{n}_x + 2\epsilon)$. Applying Part (A) of the corollary for $\tilde{Y}^K$ provides:
\begin{align*}
   \lim_{K \to \infty} \P \left(  \tilde{Y}^K(t)  \in  \mathcal{N}_{\epsilon}(x) \textrm{ for all } t \in \left[\frac{\epsilon}{Ku_K} , \frac{\epsilon^{-1}}{Ku_K} \right] \ ; \ T_{\mathcal{S}_K}\leq t_K\ ; \ Y^K_{T_{\mathcal{S}_K}}\{y\}=0\right) = 1.
\end{align*}
Moreover, since $K u_K t_K \to 0$, we obtain \eqref{desiredinequalityforE1} when $T_{\mathcal{S}_K}<t_K$, which proves part (B).\\


For part (C), fix an $\epsilon>0$ and define $Y^K$ with the initial condition specified in the statement. Let us consider $\mathcal{S}_K$ and $T_{\mathcal{S}_K}$ as in part (B).
  We can write
\begin{align}
&\P \left(  Y^K(t) \in  \mathcal{N}_{\epsilon}(x) \textrm{ for all } t \in \left[\frac{\epsilon}{K u_K}, \frac{\epsilon^{-1}}{Ku_K} \right] \right) \nonumber\\
&= \sum_{i=1}^3 \P \left(  Y^K(t) \in  \mathcal{N}_{\epsilon}(x) \textrm{ for all } t \in \left[\frac{\epsilon}{K u_K}, \frac{\epsilon^{-1}}{Ku_K} \right] \ ; \  E^K_i(\epsilon)\right),\label{etape4}
\end{align}where $E^K_1(\epsilon)=\left\{ T_{\mathcal{S}_K}  \leq t_K ,   Y^K_{T_{\mathcal{S}_K}}\{y\} = 0 \right\}$, $E^K_2(\epsilon)=\left\{ T_{\mathcal{S}_K}  \leq t_K ,  Y^K_{T_{\mathcal{S}_K}}\{y\} \geq 2\epsilon \right\}$ and $E^K_3(\epsilon)=  \left(  E^K_1(\epsilon)\cup  E^K_2(\epsilon)\right)^{c}$.\\
Let us consider the term in \eqref{etape4} corresponding to $i=1$. Since $t_K<\epsilon/Ku_K$ for sufficiently large $K$ we have
\begin{multline}
\P \left(  Y^K(t) \in  \mathcal{N}_{\epsilon}(x) \textrm{ for all } t \in \left[\frac{\epsilon}{K u_K}, \frac{\epsilon^{-1}}{Ku_K} \right] \ ; \  E^K_1(\epsilon)\right)\\
\begin{aligned}
= & \E \left(  \ind_{\{T_{\mathcal{S}_K} \leq t_K ; Y^K_{T_{\mathcal{S}_K}}\{y\}=0\}} \P\left( Y^K(t) \in  \mathcal{N}_{\epsilon}(x) \textrm{ for all } t \in \left[\frac{\epsilon}{K u_K}, \frac{\epsilon^{-1}}{Ku_K} \right]  \middle\vert   \mathcal{F}_{T_{\mathcal{S}_K}}\right) \right)\\
\end{aligned}\label{etape5}
\end{multline}On the event $E_1^K(\epsilon)$, we also have $Y_{T_{\mathcal{S}_K}}\{x\}\in [\widehat{n}_x-2\epsilon,\widehat{n}_x+2\epsilon)$. Thus, applying the strong Markov property, part (A) and the fact that $Ku_K t_K\rightarrow 0$,
the probability inside the expectation in the r.h.s. of \eqref{etape5} converges to 1. From part (E) of Proposition \ref{main_proposition}, the term in \eqref{etape4} corresponding to $i=1$ converges to $1-\Fit(y,x)/b(y)$ as $K \to \infty$ and $\epsilon \to 0$.\\
For the term corresponding to $i=2$, we condition in a way similar to \eqref{etape5}. On the event $E_2^K(\epsilon)$, $Y^K_{T_{\mathcal{S}_K}}\{y\}\geq 2\epsilon$ and $Y^K_{T_{\mathcal{S}_K}}\{x\}\in [ \widehat{n}_x - 2\epsilon, \widehat{n}_x + 2\epsilon)$. From part (C) of Proposition \ref{main_proposition}, the probability of the process $Y^K_{\cdot}\{y\}$ going below $\epsilon$ between $T_{\mathcal{S}_K}$ and $T_{\mathcal{S}_K}+\epsilon^{-1}/K u_K$ tends to 0 when $K\rightarrow +\infty$. Hence, the condition \eqref{supportprobability} of Corollary \ref{corr:identification} is satisfied and as a consequence, the stopping time:
$$\sigma_{K,\epsilon}=\inf\big\{ t\geq T_{\mathcal{S}_K} \ : \ \tilde{Y}^K_t\{x\}<\epsilon \mbox{ and }\tilde{Y}^K_t\{y\}\in [\widehat{n}_y -\epsilon, \widehat{n}_y+\epsilon]\big\}$$satisfies $Ku_K\sigma_{K,\epsilon}\rightarrow 0$ in probability.
Conditioning by $\mathcal{F}_{\sigma_{K,\epsilon}}$, and using part (B), we show that the term corresponding to $i=2$ in \eqref{etape4} converges to 0. \\
The term for $i=3$ converges to 0 since $\lim_{\epsilon \to 0 }\lim_{K \to \infty} \P \left( E^K_3(\epsilon)\right) = 0$.\\
Gathering the results for $i\in \{1,2,3\}$, we easily get
\[\lim_{\epsilon \to 0 }\lim_{K \to \infty}  \P \left(  Y^K(t) \in  \mathcal{N}_{\epsilon}(x) \textrm{ for all } t \in \left[\frac{\epsilon}{K u_K}, \frac{\epsilon^{-1}}{Ku_K} \right] \right) = 1- \frac{\mathrm{Fit}(y,x)}{b(y)}. \]
The proof that $\lim_{\epsilon \to 0 }\lim_{K \to \infty}  \P \left(  Y^K(t) \in  \mathcal{N}_{\epsilon}(y) \textrm{ for all } t \in \left[\frac{\epsilon}{K u_K}, \frac{\epsilon^{-1}}{Ku_K} \right] \right) =  \mathrm{Fit}(y,x)/b(y)$ is similar. This completes the proof of part (C) of the corollary.
\end{proof}

\subsection{Proof of Theorem \ref{theorem:convergenceTSS}: convergence to the TSS}

We have now the tools to prove Theorem \ref{theorem:convergenceTSS}. By Theorem \ref{maintheorem}, the distributions of $\{(\chi^K,\Gamma^K) : K \in \N^*\}$ are tight. Let $(\chi,\Gamma)$ be a limiting value satisfying \eqref{limitingmartingale1} and \eqref{limitingmartingale2}. Using Prohorov's theorem, there exists a subsequence $\{(\tilde{\chi}^K,\tilde{\Gamma}^K)\}$ that converges in distribution to $(\chi,\Gamma)$. By Skorokhod representation theorem (see e.g. \cite{billingsley}), there exists on the same probability space as $(\chi,\Gamma)$ a sequence again denoted by $\{(\chi^K,\Gamma^K)\}$ with an abuse of notation, that converges a.s. to $(\chi,\Gamma)$ and which has the same marginal distributions as $\{(\tilde{\chi}^K,\tilde{\Gamma}^K)\}$. Observe that the process $\{\chi'(t) : t \geq 0\}$ (introduced in the statement of Theorem \ref{theorem:convergenceTSS}) uniquely determines $\Gamma$ (through \eqref{occupationmeasurerelation}) and $\Gamma$ uniquely determines the process $\{\chi(t) : t \geq 0\}$ (through the martingale problem given by \eqref{limitingmartingale1}). It thus remains to prove part (B) of Theorem \ref{theorem:convergenceTSS}.\\
The main idea to identify the limiting values is that between subsequent appearances of new mutants, our process $X^K$ behaves like the process considered in Corollary \ref{maincorollary}. When a fit mutant appears, it either gets eradicated quickly or
the process stabilizes around the new monomorphic equilibrium characterized by the mutant trait. Between two rare mutations, the trait and size of the monomorphic equilibrium can be inferred from the occupation measure, because the population is monomorphic and because its size is shown to reach an equilibrium.\\

For $K\in \N^*,\ i \in \N^{*}$ let $\tau^K_i$ and $\tau_i$ be the $i$-th jump times of the process $\chi^K$ and $\chi$ respectively. For convenience we define $\tau^K_{0}= \tau_{0} =0$. Since $\left( \chi^K, \Gamma^K \right) \rightarrow \left(\chi,\Gamma \right)$ a.s., then for any $i \in \N^*$, $\left(\tau^K_{i-1},\tau^K_i \right) \rightarrow \left(\tau_{i-1} , \tau_i\right)$ a.s. Using \eqref{limitingmartingale1} and Lemma \ref{lemma:boundednessofmass}, we have that almost surely $\tau_i-\tau_{i-1}\in (0,+\infty)$ for each $i \in \N^*$. Thus
\begin{align}
\label{timebetweenmutations}
\lim_{\epsilon\rightarrow 0}\lim_{K \to \infty} \P \left( \tau^K_{i} - \tau^K_{i-1} \in [ \epsilon , \epsilon^{-1} ]  \right) = 1.
\end{align}
In the rest of the proof we will assume that we are always on the set $\{\forall i\in \N^*, \tau^K_i-\tau^K_{i-1}\geq \epsilon\}$. \\

For $\epsilon>0$ and $t>0$ define:
\begin{align*}
& R^{K,\epsilon}_i(t)  = \ind_{\{\tau_i^K-\tau^K_{i-1}\geq \epsilon \}} \frac{\int_{(\tau^K_{i-1}+\epsilon)\wedge t}^{\tau^K_i \wedge t} \int_{\mathcal{M}_F(\X)}\langle \mu, x\rangle \Gamma^K(ds \times d \mu)}{\int_{(\tau^K_{i-1}+\epsilon)\wedge t}^{\tau^K_i \wedge t} \int_{\mathcal{M}_F(\X)}\langle \mu, 1\rangle \Gamma^K(ds \times d \mu)}\\
& R_i (t)= \frac{\int_{\tau_{i-1}\wedge t}^{\tau_i \wedge t} \int_{\mathcal{M}_F(\X)}\langle \mu, x \rangle \Gamma(dt \times d \mu)}{\int_{\tau_{i-1}\wedge t}^{\tau_i\wedge t} \int_{\mathcal{M}_F(\X)}\langle \mu, 1\rangle \Gamma(dt \times d \mu)}.
\end{align*}Note that if $\supp( Z^K(s) ) = \{x\}$ for all $s \in [  \tau^K_{i-1}\wedge t + \epsilon, \tau^K_i\wedge t  )$, then $R^{K,\epsilon}_i(t) = x$, and the same holds for $Z$ and $R_i$. Heuristically, $R^{K,\epsilon}_i$ and $R_i$ are the estimators of the trait of the monomorphic population that fixes between the $(i-1)^{\mbox{\scriptsize{th}}}$ and the $i^{\mbox{\scriptsize{th}}}$ mutations.

Let us define:
\begin{align*}
\chi'^{K,\epsilon}(t)=x_0\ind_{t<\epsilon}+\sum_{i=1}^{+\infty}R^{K,\epsilon}_i(t) \ind_{[\tau^K_{i-1}+\epsilon,\tau_i^K+\epsilon)}(t) \qquad \mbox{ and }\qquad \chi'(t)=\sum_{i=1}^{+\infty}R_i(t) \ind_{[\tau_{i-1},\tau_i)}(t).
\end{align*}
The a.s. convergence $\left( \chi^K, \Gamma^K \right) \rightarrow \left(\chi,\Gamma \right)$ also implies that $\chi'^{K,\epsilon}$ converges to $\chi'$ as $K \to +\infty$ and $\epsilon \to 0$ in the Skorokhod space $\D([0,T],\X)$ for any $T>0$. \\

For any $i \in \N^*$ and $\epsilon>0$ define an event
\begin{align*}
E^K_i(\epsilon) = & \left\{  Z^K(t) \in \mathcal{N}_{\epsilon}(R^{K,\epsilon}_i(t)) \textrm{ for all } t \in \left[  \tau^K_{i-1} + \epsilon, \tau^K_i  \right)\right\},
\end{align*}
where for any $x \in \X $, $ \mathcal{N}_{\epsilon}(x)$ is defined by \eqref{measurevaluednbhd}. On $E^K_i(\epsilon)$, $R^{K,\epsilon}_i(t)=R^{K,\epsilon}_i(\tau^K_{i-1}+\epsilon)$ for all $t\in [\tau_{i-1}^K+\epsilon,\tau^K_i)$ and with an abuse of notation, we will write $R^{K,\epsilon}_i$ for this value. \\
Now, thanks to the convergence of the $\tau_i^K$'s and Lemma \ref{lemma:boundednessofmass}, the proof of part (B) of Theorem \ref{theorem:convergenceTSS} is done if we prove the following proposition:

\begin{proposition}\label{propcvfinale}
Under Assumptions \ref{hyp:taux} and \ref{assumption:monomorphicIC}, we have for all $i\in \N^*$ that:
\begin{align}
\label{mainfactfori=0}
\lim_{\epsilon \to 0 } \lim_{K \to \infty} \P \left( E^K_i(\epsilon)\right) = 1.
\end{align}
\end{proposition}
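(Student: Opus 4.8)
The plan is to prove \eqref{mainfactfori=0} by induction on $i$, reducing each inter-mutation interval to the monomorphic/dimorphic analysis already carried out in Corollary \ref{maincorollary}. By \eqref{timebetweenmutations} we may work on the event $\{\tau^K_j-\tau^K_{j-1}\in[\epsilon,\epsilon^{-1}]\}$ for the finitely many $j\le i$ that matter, whose probability tends to $1$ as $K\to\infty$ and then $\epsilon\to 0$; on the complementary event the interval $[\tau^K_{i-1}+\epsilon,\tau^K_i)$ is empty and $E^K_i(\epsilon)$ holds trivially. The structural fact we exploit is that between two consecutive jumps of $\chi^K$ no new trait is created, so on each interval $[\tau^K_{j-1},\tau^K_j)$ the process $Z^K$ evolves by clonal births and deaths only and, through the time change $t\mapsto t/(Ku_K)$, coincides with a process $Y^K$ of Definition \ref{def:YKL0} started from $Z^K(\tau^K_{j-1})$. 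Since mutations occur on the slow $Z^K$-scale, with probability tending to $1$ the next mutation does not occur before this $Y^K$ has resolved, so Corollary \ref{maincorollary} applies verbatim to the interval.

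For the base case $i=1$, Assumption \ref{assumption:monomorphicIC} gives $\supp(Z^K(0))=\{x_0\}$ with total mass bounded below, and no other trait is present on $[0,\tau^K_1)$. Corollary \ref{maincorollary}(A) then shows that, with probability tending to $1$, $Z^K(t)\in\mathcal{N}_\epsilon(x_0)$ for all $t$ in a window containing $[\epsilon,\tau^K_1)$; since the population is monomorphic there, $R^{K,\epsilon}_1=x_0$ and $E^K_1(\epsilon)$ holds with probability $\to 1$.

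For the inductive step, assume \eqref{mainfactfori=0} for $i$. On $E^K_i(\epsilon)$ the population just before $\tau^K_i$ is monomorphic with resident trait $x:=R^{K,\epsilon}_i$ and mass in $[\widehat{n}_x-\epsilon,\widehat{n}_x+\epsilon]$, so that $Z^K(\tau^K_i)=\nu+\frac{1}{K}\delta_y$ with $\nu\in\mathcal{N}_\epsilon(x)$ and $y$ the trait of the single mutant produced by the $i$-th mutation. Applying the strong Markov property at $\tau^K_i$ and invoking Assumption \ref{hyp:taux}(B) (which forces $\mathrm{Fit}(y,x)$ to be nonzero and precludes coexistence), two cases arise. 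If $\mathrm{Fit}(y,x)<0$, Corollary \ref{maincorollary}(B) shows the mutant is lost and $Z^K$ remains in $\mathcal{N}_\epsilon(x)$ throughout the window; if $\mathrm{Fit}(y,x)>0$, Corollary \ref{maincorollary}(C) shows that, with total probability tending to $1$, $Z^K$ stays in either $\mathcal{N}_\epsilon(x)$ or $\mathcal{N}_\epsilon(y)$ over the window. In every outcome the population is, after the transient, monomorphic around the equilibrium of the surviving trait $z$. The decisive quantitative input, supplied by the corollary through the sequence $t_K$ with $Ku_K t_K\to 0$, is that the dimorphic transient lasts a $Z^K$-time of order $Ku_K t_K$, hence $o(\epsilon)$; thus by time $\tau^K_i+\epsilon$ the transient is over and $Z^K(t)$ is a multiple of $\delta_z$ on the whole of $[\tau^K_i+\epsilon,\tau^K_{i+1})$. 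Consequently the mass-weighted estimator satisfies $R^{K,\epsilon}_{i+1}(t)=z$ and $Z^K(t)\in\mathcal{N}_\epsilon(R^{K,\epsilon}_{i+1}(t))$ there, which is precisely $E^K_{i+1}(\epsilon)$.

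I expect the main obstacle to be the careful propagation of the approximate monomorphic state across the random mutation time: one must check that the slack in $\mathcal{N}_\epsilon(x)$ coming from the induction hypothesis still meets the hypotheses of Corollary \ref{maincorollary} (which is why neighborhoods such as $\mathcal{N}_{2\epsilon}$ appear in Proposition \ref{main_proposition}), and that the vanishing-in-$Z^K$-time transient neither escapes the $\epsilon$-relaxation window nor corrupts the estimator $R^{K,\epsilon}_{i+1}$. Once the identification of $Z^K$ with $Y^K$ on each inter-mutation interval and the negligibility of the transient are in place, the three cases assemble and the iterated limits $K\to\infty$ then $\epsilon\to 0$ yield \eqref{mainfactfori=0}.
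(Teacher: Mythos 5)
Your proposal follows essentially the same route as the paper: induction on $i$, identification of $Z^K$ with a mutation-free process $Y^K$ of Definition \ref{def:YKL0} on each inter-mutation interval via the time change, the base case from Assumption \ref{assumption:monomorphicIC} together with Corollary \ref{maincorollary}(A), and the inductive step by conditioning at $\tau^K_i$ on the resident trait and the mutant type and applying Corollary \ref{maincorollary}(B) or (C) according to the sign of $\mathrm{Fit}(y,x)$, the two fixation probabilities summing to $1$. The argument is correct and matches the paper's proof in all essential respects.
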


Indeed, if Proposition \ref{propcvfinale} is proved, then $\Gamma$ is identified. To see this, let $F_f \in \mathbb{F}^{2}_b, i \in \N^*$ and $t \geq 0$, and define two real-valued variables
\begin{align*}
\rho^K &  = \int_{\tau^K_{i-1} \wedge t}^{\tau^K_i \wedge t} \int_{\mathcal{M}_F(\X)}F_f(\mu) \Gamma^K(ds \times d \mu)
 \textrm{ and } \rho = \int_{\tau_{i-1} \wedge t}^{\tau_i \wedge t} \int_{\mathcal{M}_F(\X)}F_f(\mu) \Gamma (ds \times d \mu).
\end{align*}
Then certainly $\rho^K \Rightarrow \rho$ as $K \to \infty$. Take any open set $B \in \R$. Then
\begin{align*}
\lim_{K \to \infty }\P \left( \rho^K \in B  \right) & = \lim_{\epsilon \to 0 } \lim_{K \to \infty }\P \left( \rho^K \in B , \ E^K_{i}(\epsilon)\right) \\
& = \lim_{\epsilon \to 0 } \lim_{K \to \infty }\P \left( \int_{\tau^K_{i-1} \wedge t}^{\tau^K_i \wedge t} F_f\left( \widehat{n}_{R^K_i(\epsilon)} \delta_{R^{K,\epsilon}_i} \right) ds \in B , \ E^K_{i}(\epsilon)\right)\\
& = \lim_{\epsilon \to 0 } \lim_{K \to \infty }\P \left(  \left( \tau^K_{i} \wedge t - \tau^K_{i-1} \wedge t\right) F_f\left( \widehat{n}_{R^{K,\epsilon}_i} \delta_{R^{K,\epsilon}_i} \right)  \in B \right)\\
& = \P \left(  \left( \tau_{i} \wedge t - \tau_{i-1} \wedge t\right) F_f\left( \widehat{n}_{R_i} \delta_{R_i} \right)  \in B \right).
\end{align*}
This proves \eqref{occupationmeasurerelation}.

\begin{proof}[Proof of Proposition \ref{propcvfinale}]
For each $i \in \N^*$, we can construct a process $\{ Y^K_i(t) : t \geq 0  \}$ with generator $L^K_0$ such that
\begin{align}
\label{couplingofyks}
Z^K (\tau^K_i +  t) = Y^K_i\left( \frac{t}{K u_K} \right) \textrm{ for all } t \in [0, \tau^K_{i+1} - \tau^K_i).
\end{align}

For $i=1$, we obtain from Assumption \ref{assumption:monomorphicIC} and part (A) of Corollary \ref{maincorollary} that
\begin{align}
\label{fori=1}
\lim_{K \to \infty} \P \left( Y^K_0(t) \in \mathcal{N}_{\epsilon}(x_0) \textrm{ for all } t \in \left[ \frac{\epsilon}{K u_K} , \frac{\epsilon^{-1}}{K u_K} \right]\right) = 1,
\end{align}from which we have \eqref{mainfactfori=0} for $i=1$ thanks to \eqref{couplingofyks} and \eqref{timebetweenmutations}.\\

We now proceed by induction. Let us assume that for $i\geq 1$, we have \eqref{mainfactfori=0}. Let $U^K_i$ denote the type of the new mutant that appears at time $\tau^K_i$. Pick $x,y \in \X$. On the event $\{E^K_i(\epsilon), R^{K,\epsilon}_i = x,U^K_i = y \}$, we have $\textrm{supp}(Z^K \left( \tau^K_i \right)) = \{x,y\}$, $Z^K_{\tau^K_i}\{y\} = 1/K$ and $Z^K_{\tau^K_i}\{x\} \in [ \widehat{n}_x -\epsilon, \widehat{n}_x+\epsilon]$. Using parts (B) and (C) of Corollary \ref{maincorollary} and \eqref{couplingofyks}, we obtain that
\begin{align*}
&\lim_{\epsilon \to 0}\lim_{K \to \infty} \P \left( E^K_{i+1}(\epsilon),R^{K,\epsilon}_{i+1} = x \middle \vert  E^K_i(\epsilon), R^{K,\epsilon}_i = x,U^K_i = y \right) = \left(1- \frac{ [\textrm{Fit}(y,x)]^+}{b(y)} \right)\\
&\textrm{ and }\lim_{\epsilon \to 0}\lim_{K \to \infty} \P \left( E^K_{i+1}(\epsilon),R^{K,\epsilon}_{i+1} = y \middle \vert E^K_i(\epsilon), R^{K,\epsilon}_i= x,U^K_i = y  \right) = \frac{ [\textrm{Fit}(y,x)]^+}{b(y)}.
\end{align*}
Since the distribution of $U^K_i $ conditionally to $\{ E^K_i(\epsilon), R^{K,\epsilon}_i(\tau^K_i) = x\}$ is $m(x,dy)$,
\begin{align}
\label{mainfactfori}
&\lim_{\epsilon \to 0}\lim_{K \to \infty} \P \left( E^K_{i+1}(\epsilon), R^{K,\epsilon}_{i+1} \in  A \middle \vert  E^K_i(\epsilon), R^{K,\epsilon}_i = x \right)\notag \\& = \ind_{A}(x)\int_{\X} \left(1- \frac{ [\textrm{Fit}(y,x)]^+}{b(y)} \right)m(x,dy) + \int_{A} \frac{ [\textrm{Fit}(y,x)]^+}{b(y)} m(x,dy).
\end{align}
This along with \eqref{fori=1} for $i=1$ implies that for any $i \in \N^*$ we have $\lim_{\epsilon \to 0}\lim_{K \to \infty} \P \left( E^K_{i}(\epsilon) \right) = 1.$ This concludes the proof of Proposition \ref{propcvfinale} and the proof of Theorem \ref{theorem:convergenceTSS}.
\end{proof}


\renewcommand {\theequation}{A.\arabic{equation}}
\appendix
\setcounter{equation}{0}

\section{Appendix.}

We start with recalling some useful estimates for birth and death process (Lemma \ref{branchingprocessbehaviourlemma}). Coupling facts between the components of a 2D-birth and death processes on the quadrant (see \eqref{transitions}) and 1D-birth and death processes are then given in Proposition \ref{proposition:couplingwithbranchingprocesses}. Both results are then used to obtain estimates for the 2D-birth and death processes on the quadrant.

\begin{definition}
\label{branchingprocessdefinition}
For any $b, d \in \R_{+}$ and $n \in \N$, let ${\bf P}(b,d,n)$ denote the law of the $\N$-valued continuous time branching process starting at $n$ with birth rate $b$ and death rate $d$. For convenience, we will consider ${\bf P}(b,\infty,n)$ to be the law of the process that is $0$ at all times.
\end{definition}

\begin{lemma}
\label{branchingprocessbehaviourlemma}
For each positive integer $K$ let $\{ B_K(t) : t \geq 0  \}$ be a continuous time branching process with law ${\bf P}(b,d,K)$ with $b \neq d$. Pick an $\epsilon \in [0,1]$ and define a stopping time
\begin{align*}
\sigma_K = \inf \left\{ t \geq 0 :  B^K(t) \leq K (1-\epsilon) \textrm{ or } B^K(t) > K (1+\epsilon)\right\}.
\end{align*}
Then we have the following.
\begin{itemize}
\item[(A)] If $b < d$ then $\P \left( B^K\left(\sigma_K \right) > K (1+\epsilon) \right) \leq \exp\left(-K \epsilon \log (d/b)\right)$\\
and if $b>d$ then $\P \left( B^K\left(\sigma_K \right) \leq K (1-\epsilon) \right) \leq \exp\left(-K \epsilon \log (b/d)\right)$.\\
\item[(B)] If $bd=0$ and $\epsilon \notin \{0,1\}$ then $\inf_{K \geq 1} \E \left( \sigma_K \right) > 0 \textrm{ and } \sup_{K \geq 1} \E \left( \sigma^2_K \right) < \infty.$\\
In parts (C) and (D) let $t_K$ be any $\N^*$-valued sequence satisfying $\log K \ll t_K$
\item[(C)] If $\epsilon = 1$ and if $b < d$ then
$\lim_{K \to \infty} \P \left( \sigma_K \leq t_K, B^K\left(\sigma_K \right) = 0 \right) = 1.$\\
\item[(D)] Let $\{Y(t) : t \geq 0  \}$ be a branching process with law ${\bf P}(b,d,1)$, where $b>d$. For any $\epsilon>0$ define
\begin{align*}
\gamma_K = \inf \left\{ t \geq 0 :  Y(t) = 0 \textrm{ or } Y(t) \geq K \epsilon)\right\}.
\end{align*}
Then
\[1- \lim_{K \to \infty} \P \left( \gamma_K \leq t_K, Y\left(\gamma_K \right) \geq K \epsilon \right) = \lim_{K \to \infty} \P \left( \gamma_K \leq t_K,Y\left(\gamma_K \right) = 0 \right) = \frac{d}{b}.\]
\end{itemize}
\end{lemma}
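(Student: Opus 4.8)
The common engine for all four parts is that, writing $r=d/b$, the function $n\mapsto r^{n}$ is harmonic for the total–population process $B^{K}$. Since $B^{K}$ jumps from $n$ to $n+1$ at rate $bn$ and from $n$ to $n-1$ at rate $dn$, its generator applied to $r^{n}$ equals $n r^{n-1}(br-d)(r-1)$, which vanishes because $br-d=0$. Hence $M(t)=r^{B^{K}(t)}$ is a local martingale, and on $[0,\sigma_{K}]$ the stopped process takes values in a fixed compact interval, so it is a bounded martingale to which the optional stopping theorem applies. I would also keep at hand the mean martingale $e^{-(b-d)t}B^{K}(t)$, whose a.s.\ limit will control the growth rate in part (D). Part (A) is then immediate: optional stopping at $\sigma_{K}$ gives $r^{K}=\E[r^{B^{K}(\sigma_{K})}]$. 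When $b<d$ we have $r>1$, and on $\{B^{K}(\sigma_{K})>K(1+\epsilon)\}$ the integrand is at least $r^{K(1+\epsilon)}$, so $r^{K}\ge r^{K(1+\epsilon)}\P(B^{K}(\sigma_{K})>K(1+\epsilon))$, which rearranges to the bound $\exp(-K\epsilon\log(d/b))$; the case $b>d$ (so $r<1$) is symmetric, using that on $\{B^{K}(\sigma_{K})\le K(1-\epsilon)\}$ the integrand is at least $r^{K(1-\epsilon)}$.

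For part (B), where $bd=0$, the process is monotone: if $d=0$ it only increases and exits through the upper boundary after about $K\epsilon$ jumps, while if $b=0$ it only decreases and exits through the lower boundary. In either case $\sigma_{K}$ is an explicit sum of independent exponentials $\sum_{n}E_{n}$, with $E_{n}$ of rate $bn$ (resp.\ $dn$) over a range of length $\sim K\epsilon$. The mean is a harmonic sum converging to $\tfrac1b\log(1+\epsilon)$ (resp.\ $\tfrac1d\log\tfrac1{1-\epsilon}$), strictly positive and finite, giving $\inf_{K}\E(\sigma_{K})>0$; the variance is $\sum 1/(bn)^{2}=O(1/K)\to0$, so $\E(\sigma_{K}^{2})$ stays bounded. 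The hypothesis $\epsilon\notin\{0,1\}$ is exactly what is needed: $\epsilon=0$ forces $\sigma_{K}=0$, and $\epsilon=1$ in the pure–death case makes the range reach $0$, turning the mean into $\sum_{n=1}^{K}1/(dn)\sim(\log K)/d\to\infty$.

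Parts (C) and (D) combine these martingale bounds with branching–process time estimates, and this is where I expect the real work. For part (C) ($\epsilon=1$, $b<d$) the two ways to fail are exiting through $2K$, controlled by part (A) with $\epsilon=1$ (probability $\le e^{-K\log(d/b)}$), and not having exited by time $t_{K}$, which by coupling is dominated by $\{\tilde\tau_{0}>t_{K}\}$ for the unrestricted $K$-particle subcritical process. Since that process is a sum of $K$ i.i.d.\ single–particle lineages whose extinction time has an exponential tail $\P(\tau>t)\sim Ce^{-(d-b)t}$, a union bound gives $\P(\tilde\tau_{0}>t_{K})\le KCe^{-(d-b)t_{K}}=C\exp(\log K-(d-b)t_{K})\to0$, using precisely $\log K\ll t_{K}$. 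For part (D), optional stopping on $r^{Y(t)}$ at $\gamma_{K}$ (which is a.s.\ finite, and bounded on the stopped process) gives the spatial probabilities exactly: with $N=\lceil K\epsilon\rceil$ and $Y(0)=1$, $r=\P(Y(\gamma_{K})=0)+r^{N}\P(Y(\gamma_{K})\ge K\epsilon)$, and letting $K\to\infty$ (so $r^{N}\to0$) yields $\P(Y(\gamma_{K})=0)\to d/b$ and $\P(Y(\gamma_{K})\ge K\epsilon)\to1-d/b$.

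It then remains, in (D), to insert the time constraint, i.e.\ to show $\P(\gamma_{K}>t_{K})\to0$, and this is the main obstacle. On the extinction event it follows by dominated convergence, since the extinction time is a.s.\ finite and independent of $K$ while $t_{K}\to\infty$; on the survival event it is the genuinely technical point, requiring that $Y$ reach level $K\epsilon$ within time $t_{K}$. Here I would use the martingale limit $e^{-(b-d)t}Y(t)\to W$ with $W>0$ on survival: then $Y$ exceeds $K\epsilon$ once $e^{(b-d)t}\gtrsim K\epsilon/W$, i.e.\ by a random time $\tfrac1{b-d}\log K+O(1)$, which is $o(t_{K})$, so $\P(\text{survive},\,\gamma_{K}>t_{K})\to0$; the residual possibility of reaching $K\epsilon$ and then dying has probability $\le r^{N}\to0$. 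The unifying difficulty in both (C) and (D) is thus matching the $O(\log K)$ extinction and growth timescales of the branching process against the window $t_{K}$, which is exactly what the standing assumption $\log K\ll t_{K}$ is calibrated to permit.
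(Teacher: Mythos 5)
Your proposal is correct, and it is worth recording where it coincides with and where it departs from the paper's own proof. Part (A) is identical: the paper also observes that $M_K(t)=(d/b)^{B^K(t\wedge\sigma_K)}$ is a bounded martingale and applies optional sampling, bounding $\E(M_K(\sigma_K))$ below on the relevant exit event. For part (B) the paper takes a different route: instead of writing $\sigma_K$ as an explicit sum of independent exponentials with rates $bn$ over the range $n\in[K,K(1+\epsilon)]$ and summing means and variances, it applies optional sampling to the compensated martingales $B^K(t)-b\int_0^t B^K(s)\,ds$ and $tB^K(t)-\int_0^t B^K(s)(bs+1)\,ds$, which yields the two-sided bound $\epsilon/(b(1+\epsilon))\leq\E(\sigma_K)\leq\epsilon/b$ and then the second-moment bound. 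Your version is more elementary and gives the sharper asymptotics $\E(\sigma_K)\to b^{-1}\log(1+\epsilon)$, and your remark on why $\epsilon\notin\{0,1\}$ is needed (the harmonic sum diverging like $\log K$ when the pure-death process must reach $0$) is exactly the right diagnosis. The real divergence is in parts (C) and (D): the paper proves nothing here and simply cites Theorem 4 of Champagnat (2006), whereas you supply self-contained arguments --- for (C) the union bound $\P(\tau_0>t_K)\leq CK e^{-(d-b)t_K}\to 0$ over the $K$ independent subcritical lineages combined with the part-(A) bound on exiting upward, and for (D) optional stopping of $r^{Y}$ for the exact hitting probabilities plus the Kesten--Stigum-type limit $e^{-(b-d)t}Y(t)\to W>0$ on survival to fit the growth to level $K\epsilon$ inside the window $t_K$. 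Both of your time estimates hinge, correctly, on the hypothesis $\log K\ll t_K$, which is precisely the calibration used in the cited reference. The only points to polish are routine: note explicitly that $\sigma_K<\infty$ a.s. (absorption at $0$ or escape to $\infty$) before invoking optional sampling, and that the a.s. positivity of $W$ on survival is the continuous-time Kesten--Stigum theorem, trivially applicable here since the offspring law has all moments.
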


\begin{proof}
Note that $\sigma_K < \infty$ a.s. since a branching process either goes to $0$ or to $\infty$ almost surely. We can easily check that $M_K(t) = ( d/b)^{B^K(t \wedge \sigma_K)}$
is a bounded martingale. Then by the optional sampling theorem we get
\begin{align}
\label{optionalsamplingequation}
\E \left( M_K(\sigma_K)\right) = \E \left(M_K(0) \right) = \left( \frac{d}{b}\right)^{K}.
\end{align}
If $b<d$ then $ \E \left( M_K(\sigma_K)\right)  \geq \P \left( B^K\left(\sigma_K \right) > K (1+\epsilon) \right) (d/n)^{ K (1+\epsilon)}$ and if $b>d$ then
$\E \left( M_K(\sigma_K)\right)  \geq \P \left( B^K\left(\sigma_K \right) \leq K (1-\epsilon) \right) ( d/b)^{K(1-\epsilon)}$.
Substituting these estimates in \eqref{optionalsamplingequation} proves part (A). \\

For part (B) we assume that $d=0$ and $b>0$. The case where $b=0$ and $d>0$ is similar. We can also assume that $K(1+\epsilon)$ is a positive integer. Since $d=0$, the process $B^K$ is monotonically increasing and hence $ B^K(\sigma_K) = K(1+\epsilon)$. We can show with Dynkin's formula that
$ B^K(t) - b \int_{0}^{t} B^K(s)ds$ is a martingale, and obtain by the optional sampling theorem that
\begin{align*}
K = \E \left( B^K(\sigma_K)\right) -b \E \left( \int_{0}^{\sigma_K} B^K(s) ds \right) = K (1+\epsilon) - b \E \left( \int_{0}^{\sigma_K} B^K(s) ds\right).
\end{align*}
Since $B^K(t) \in [K,K(1+\epsilon)]$ for $t \leq \sigma_K$ we have
$K \E \left( \sigma_K\right) \leq \E \left( \int_{0}^{\sigma_K} B^K(s)ds\right) \leq  K(1+\epsilon) \E \left( \sigma_K\right),$
and hence
\begin{align}
\label{firstmomentofsigmak}
\frac{ \epsilon}{b(1+\epsilon)} \leq \inf_{K \geq 1} \E \left( \sigma_K\right) \leq \sup_{K \geq 1} \E \left( \sigma_K\right) \leq \frac{ \epsilon}{b}.
\end{align}
From the fact that $ t B^K(t) - \int_{0}^{t} B^K(s)(bs+1)ds$
is also a martingale, we obtain that
\[  K \E \left( \frac{b}{2} \sigma_K^2 + \sigma_K\right)\leq \E \left( \int_{0}^{\sigma_K} B^K(s)(bs+1)ds \right) = \E \left( \sigma_K B^K(\sigma_K) \right) = K(1+\epsilon) \E \left(\sigma_K \right).\]
This along with \eqref{firstmomentofsigmak}, proves part (B). Parts (C) and (D) follow directly from \cite[Th. 4]{champagnat3}.
\end{proof}

\begin{definition}
\label{infandsup}
Suppose that $d$ is a function from $\N \times \N$ to $\R_{+}$. Then for any set $\mathcal{S} \subset \R_{+} \times \R_{+} $ we define
\begin{align*}
\inf d(\mathcal{S}) &= \inf \left\{ d(m,n) : (m,n) \in \mathcal{S} \cap (\N \times \N) \right\} \\
\textrm{  and  } \sup d(\mathcal{S}) &= \sup \left\{ d(m,n) : (m,n) \in \mathcal{S} \cap (\N \times \N) \right\}.
\end{align*}
\end{definition}

\begin{proposition}
\label{proposition:couplingwithbranchingprocesses}
Let $\{N(t) = \left( N_1(t), N_2(t) \right)\}$ be a $\N \times \N$-valued pure jump Markov process with the following transition rates.
\[\begin{array}{cc}
m b_1  & \textrm{ from } (m,n) \textrm{ to }  (m+1,n) \\
n b_2  & \textrm{ from } (m,n) \textrm{ to }  (m,n+1) \\
m d_1(m,n) &  \textrm{ from } (m,n) \textrm{ to }  (m-1,n) \\
n d_2(m,n) &  \textrm{ from } (m,n) \textrm{ to }  (m,n-1) .
\end{array}
\]
Here $b_1,b_2$ are positive constants and $d_1,d_2$ are functions from $\N \times \N$ to $\R_{+}$. Suppose that there is a set $\mathcal{S} \subset \R_{+} \times \R_{+}$ and constants $d^{+}_1,d^{-}_1, d^{+}_2, d^{-}_2 \in [0,\infty]$ such that
\begin{align*}
d^{-}_1 &\leq \inf d_1(\mathcal{S})\leq \sup d_1(\mathcal{S}) \leq d^{+}_1  \textrm{ and } \\
d^{-}_2 &\leq \inf d_2(\mathcal{S})\leq \sup d_2(\mathcal{S})\leq d^{+}_2
\end{align*}
Assume that $(N_1(0),N_2(0)) \in \mathcal{S}$ and let $T_\mathcal{S}$ be the random time defined by
\begin{align*}
T_\mathcal{S} = \inf\{ t \geq 0 : N(t) \notin \mathcal{S} \}.
\end{align*}
Let $z^{+}_1,z^{-}_1,z^{+}_2,z^{-}_1$ be positive integers satisfying $z^{-}_1 \leq N_1(0)\leq z^{+}_1$ and $z^{-}_2 \leq N_2(0)\leq z^{+}_2$.
Then on the same probability space as $N$, we can construct four $\N$-valued processes $B^{+}_1,B^{-}_1$, $B^{+}_2$ and $B^{-}_2$ with laws ${\bf P}(b_1,d^{-}_1,z^{+}_1)$, ${\bf P}(b_1,d^{+}_1,z^{-}_1)$, ${\bf P}(b_2,d^{-}_2,z^{+}_2)$, ${\bf P}(b_2,d^{+}_2,z^{-}_1)$ such for all $t \leq T_\mathcal{S}$ the following relations are satisfied almost surely,
\begin{align*}
 B^{-}_1(t) \leq N_1(t) \leq B^{+}_1(t) \quad\textrm{ and } \quad
 B^{-}_2(t) \leq N_2(t) \leq B^{+}_2(t) .
\end{align*}
\end{proposition}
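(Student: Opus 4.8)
The plan is to realize all five processes $N=(N_1,N_2)$, $B_1^{\pm}$ and $B_2^{\pm}$ simultaneously on one probability space through a pathwise (graphical) representation driven by Poisson point measures, and then to check that the required two-sided ordering is propagated through the jumps for as long as $N$ remains in $\mathcal{S}$. Let $Q^{b}_1,Q^{d}_1,Q^{b}_2,Q^{d}_2$ be independent Poisson random measures on $\R_+\times\R_+$ with intensity $ds\,du$. I would realize $N_1$ as the solution of
\begin{align}
N_1(t) = N_1(0) &+ \int_0^t\!\!\int_{\R_+}\ind_{\{u\leq b_1 N_1(s^-)\}}\,Q^b_1(ds,du)\nonumber\\
&- \int_0^t\!\!\int_{\R_+}\ind_{\{u\leq d_1(N_1(s^-),N_2(s^-))\,N_1(s^-)\}}\,Q^d_1(ds,du),\nonumber
\end{align}
and symmetrically for $N_2$ with $Q^b_2,Q^d_2$; this reproduces exactly the rates in \eqref{transitions}. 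On the \emph{same} measures I would define the dominating process of the first coordinate by
\begin{align}
B_1^{+}(t) = z_1^{+} &+ \int_0^t\!\!\int_{\R_+}\ind_{\{u\leq b_1 B_1^{+}(s^-)\}}\,Q^b_1(ds,du)\nonumber\\
&- \int_0^t\!\!\int_{\R_+}\ind_{\{u\leq d_1^{-}\,B_1^{+}(s^-)\}}\,Q^d_1(ds,du),\nonumber
\end{align}
and $B_1^{-}$ by the same formula with initial value $z_1^{-}$ and death threshold $d_1^{+}B_1^{-}(s^-)$; the second coordinate uses $Q^b_2,Q^d_2$ with rates $d_2^{\mp}$ and initial data $z_2^{\pm}$. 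Since each $B_i^{\pm}$ only ever sees the constant per-capita rates $b_i$ and $d_i^{\mp}$, it is a branching process with the announced law ${\bf P}(b_i,d_i^{\mp},z_i^{\pm})$ \emph{for all times} and independently of $\mathcal{S}$, and it is non-explosive because its birth rate is linear; sharing the driving noise with $N$ does not alter these marginals.

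The key step is the inequality $B_1^{-}(t)\leq N_1(t)\leq B_1^{+}(t)$ (and its analogue for the second coordinate) for all $t\leq T_\mathcal{S}$, which I would establish by induction over the (locally finitely many) jump times, verifying that no atom of the driving measures occurring before $T_\mathcal{S}$ can violate the partial order. For a birth atom of $Q^b_1$ at height $u$, the inductive hypothesis gives $b_1 B_1^{-}(s^-)\leq b_1 N_1(s^-)\leq b_1 B_1^{+}(s^-)$, so the births it triggers are nested and the order is preserved. For a death atom of $Q^d_1$ at a time $s\leq T_\mathcal{S}$, the pre-jump state $N(s^-)$ lies in $\mathcal{S}$, whence, writing $d_1(\cdot)$ for $d_1(N_1(s^-),N_2(s^-))$, we have $d_1^{-}\leq d_1(\cdot)\leq d_1^{+}$. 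The only configurations that could break the order are a death of $B_1^{+}$ unmatched by a death of $N_1$ at a coincidence $N_1=B_1^{+}$, or a death of $N_1$ unmatched by a death of $B_1^{-}$ at a coincidence $N_1=B_1^{-}$. At $N_1=B_1^{+}=k$ one has $d_1^{-}k\leq d_1(\cdot)k$, so $\{u\leq d_1^{-}B_1^{+}\}\subset\{u\leq d_1(\cdot)N_1\}$ and every death of $B_1^{+}$ forces a death of $N_1$; at $N_1=B_1^{-}=k$ one has $d_1(\cdot)k\leq d_1^{+}k$, so $\{u\leq d_1(\cdot)N_1\}\subset\{u\leq d_1^{+}B_1^{-}\}$ and every death of $N_1$ forces a death of $B_1^{-}$. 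Both dangerous transitions are thereby excluded, and the order survives the jump.

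I would close by emphasizing that $\mathcal{S}$ enters only through this death step: the sandwich $d_1^{-}\leq d_1(\cdot)\leq d_1^{+}$ is available precisely because $N(s^-)\in\mathcal{S}$ at every jump time $s\leq T_\mathcal{S}$, which is why the domination is claimed only up to $T_\mathcal{S}$, after which the $B_i^{\pm}$ simply continue as free branching processes. Running the identical argument with $Q^b_2,Q^d_2$ yields $B_2^{-}(t)\leq N_2(t)\leq B_2^{+}(t)$ on $[0,T_\mathcal{S}]$. The hard part is not any individual estimate but the bookkeeping of a genuinely two-sided domination under a single shared pair of Poisson measures per coordinate: one must confirm that no atom simultaneously produces an order-violating up-move of the lower process or down-move of the upper process, and this is exactly what the finite case analysis at the coincidence configurations $N_1=B_1^{\pm}$ above guarantees.
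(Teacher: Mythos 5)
Your construction is correct and is precisely the ``direct coupling'' that the paper invokes in its one-line proof of Proposition \ref{proposition:couplingwithbranchingprocesses}: a shared Poisson-measure (thinning) representation per coordinate, with the order propagated through jumps because the indicator sets are nested as long as $N(s^-)\in\mathcal{S}$ guarantees $d_i^{-}\leq d_i(N_1(s^-),N_2(s^-))\leq d_i^{+}$. You have simply supplied the details the paper omits, including the only genuinely delicate point (the coincidence configurations $N_i=B_i^{\pm}$), so no comparison is needed.
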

\begin{proof}The proof follows from direct coupling of these processes.\end{proof}

\begin{proof}[Proof of Proposition \ref{proposition:couplingwithbranchingprocesses2}]
We prove part (A). Without loss of generality, we can assume that $K \eta, K\eta(1-\epsilon),K\eta(1+\epsilon)$ are positive integers and $N^K_1(0) = K \eta$. Define
\begin{align*}
\sigma_K = \inf \left\{ t \geq 0 :  N^K(t) \notin [K\eta(1-\epsilon),K\eta(1+\epsilon)] \times [C_K,D_K) \right\}.
\end{align*}
In order for the process $N^K_1$, started at $K\eta$, to go beyond level $B_K$ it must exit the interval $[K\eta(1-\epsilon),K\eta(1+\epsilon)]$ from above. The probability to go from $K\eta$ to $K\eta(1+\epsilon)$ is exponentially small. Indeed, since $b < d^{-}_1(\eta,\epsilon)$, by Proposition \ref{proposition:couplingwithbranchingprocesses} we can construct a coupled subcritical branching process $B^K_{+}$ with law ${\bf P}(b_1,d^{-}_1(\eta,\epsilon),K)$ such that $B^K_{+}(t) \geq N^K_1(t)$ for all $t \leq \sigma_K$ almost surely. Hence if $\gamma_K$ is the first time the process $B^K_{+}$ leaves the set $[K\eta(1-\epsilon),K\eta(1+\epsilon)]$ then from part (A) of Lemma \ref{branchingprocessbehaviourlemma} we obtain that for some $c>0$
\begin{align}
\label{subcriticalexitprobability}
\P \left( N^K_1(\sigma_K) > K\eta(1+\epsilon)  \right) \leq \P \left( B^K_{+}(\gamma_K) > K\eta(1+\epsilon)\right) \leq e^{-c K}.
\end{align}
Before $N^K_1$ exists $[A_K,B_K)$ from above, this process crosses the interval $[K\eta(1-\epsilon),K\eta]$ several times. Let $\rho_K$ be the number (possibly 0) of these passages in the interval $[0,\mathcal{T}_{\mathcal{S}_K}]$. Because of \eqref{subcriticalexitprobability}, for any $n\in \N$,
\begin{align}
\label{exitprobabilityestimate}
\P \left( \rho_K < n  \right) \leq n e^{-cK}.
\end{align}
Let $n_K =[ 1/Ku_K ]^2$. Then,
\begin{multline}
 \P \left( T_{\mathcal{S}_K} \leq \frac{T}{K u_K} , N^K_1 \left( T_{\mathcal{S}_K} \right) \geq B_K \right) \\
\begin{aligned}
& \leq \P \left( T_{\mathcal{S}_K} \leq \frac{T}{K u_K} , N^K_1 \left( T_{\mathcal{S}_K} \right) \geq K\eta(1+\epsilon) , \rho_K \geq n_K \right) + \P (\rho_K < n_K)\\
&\leq \P \left( \sum_{i=1}^{n_K}  \tau_{K,i} < \frac{T}{K u_K}\right) + \P (\rho_K < n_K),\label{etape2}
\end{aligned}
\end{multline}where the $\tau_{K,i}$ denotes the durations of the $n_K$'s first passages of $N^K_1$ from $K\eta$ to $K\eta(1+\epsilon)$. The last term in the r.h.s. of \eqref{etape2} tends to 0 with $K$ due to our choice of $n_K$, \eqref{exitprobabilityestimate} and \eqref{timescaleseparation}. Using the couplings of Proposition \ref{proposition:couplingwithbranchingprocesses}, it is possible to dominate $N^K_1$ by a pure birth process with distribution ${\bf P}(b_1,0,K\eta(1-\epsilon))$ on each of its excursions of $N^K_1$ from $K\eta(1-\epsilon)$ to $K\eta$. Thus, each $\tau_{K,i}$ can be bounded above by the time $\bar{\sigma}_{K,i}$ needed by a pure birth process with distribution ${\bf P}(b_1,0,K\eta(1-\epsilon))$ to reach $K\eta$. The $\bar{\sigma}_{K,i}$'s can be chosen to be i.i.d. and by Lemma \ref{branchingprocessbehaviourlemma}, Part (B), $
\inf_{K \geq 1} \E \left( \tau_{K,1} \right) > 0$ and $\sup_{K \geq 1} \E \big( \tau^2_{K,1} \big) < \infty$.
Applying Chebychev's inequality we get
\begin{align*}
\P \left( \sum_{i=1}^{n_K} \tau_{K,i} \leq \frac{T}{K u_K} \right) & \leq  \P \left( \frac{1}{n_K} \sum_{i=1}^{n_K} \bar{\sigma}_{K,i} - \E \left( \bar{\sigma}_{K,i} \right)  \leq \frac{T}{K u_K n_K} -\E \left( \bar{\sigma}_{K,1} \right) \right) \\
& \leq \frac{ \E \big( \bar{\sigma}_{K,1}^2 \big) }{n_K \left( \E \left( \bar{\sigma}_{K,1} \right) - \frac{T}{K u_K n_K} \right)^{+}}.
\end{align*}
As $K \to \infty$, $T/(K u_K n_K) \to 0$ and $n_K \to \infty$. This proves part (A). Proof of part (B) is similar and part (C) is a direct consequence of parts (A) and (B).\\

We now prove part (D). Using Proposition \ref{proposition:couplingwithbranchingprocesses} and since $b_2<d^{-}_2(\epsilon)$ and $N^K_2(0) \leq K \epsilon /2$, $N^K_2$ can be dominated on $[0,T_{\mathcal{S}_K}]$ by a subcritical branching process $B^K_{+}$ with distribution ${\bf P}(b_2,d^{-}_2(\epsilon), [K \epsilon /2])$. Let
\begin{align}
\label{defgammak+}
 \gamma_K^{+} = \inf \{ t \geq 0 : B^K_{+}(t) = 0 \textrm{ or } B^K_{+}(t) \geq K \epsilon \}.
\end{align}
The coupling ensures that if $T_{\mathcal{S}_K} \geq \gamma_K^+$ and $B^K_{+}(\gamma_K^+) = 0$ then $N^K_2(\gamma_K^+)  = 0$ and $T_{\mathcal{S}_K} = \gamma_K^+$. Since $B^K_{+}(t)\in (0,K \epsilon)$ for $t \in [0,\gamma_K^+)$ we can also see that if $T_{\mathcal{S}_K} < \gamma_K^+$ then either $N^K_2 \left( T_{\mathcal{S}_K} \right) = 0$ or $N^K_1 \left( T_{\mathcal{S}_K} \right) \notin [A_K,B_K)$. Hence
\begin{align}
&\P \left( \gamma_K^+ \leq t_K, B^K_{+}(\gamma_K^+) = 0 \right)\nonumber\\
& \leq \P \left( \gamma_K^+ \leq t_K , B^K_{+}(\gamma_K^+) = 0 , T_{\mathcal{S}_K} \geq \gamma_K^+\right) + \P \left( \gamma_K^+ \leq t_K ,  T_{\mathcal{S}_K} < \gamma_K^+\right)\nonumber\\
& \leq  \P \left( T_{\mathcal{S}_K} \leq t_K, N^K_2 \left( T_{\mathcal{S}_K} \right) = 0 , T_{\mathcal{S}_K} = \gamma_K^+\right)
+ \P \left( T_{\mathcal{S}_K} \leq t_K, N^K_2 \left( T_{\mathcal{S}_K} \right) = 0 , T_{\mathcal{S}_K} < \gamma_K^+\right) \nonumber\\
& + \P \left( T_{\mathcal{S}_K} \leq t_K, N^K_1 \left( T_{\mathcal{S}_K} \right) \notin [A_K,B_K), T_{\mathcal{S}_K} < \gamma_K^+ \right)\nonumber\\
& \leq \P \left( T_{\mathcal{S}_K} \leq t_K, N^K_2 \left( T_{\mathcal{S}_K} \right) = 0 \right) +
\P \left( T_{\mathcal{S}_K} \leq t_K, N^K_1 \left( T_{\mathcal{S}_K} \right) \notin [A_K,B_K) \right).\label{etape3}
\end{align}
Due to part (C), the limit of the second term in the r.h.s. of \eqref{etape3} is $0$. Since part (C) of Lemma \ref{branchingprocessbehaviourlemma} tells us that the limit of the l.h.s. is 1, this proves part (D).\\

We now prove part (E). Since $b_2 > d^{+}_2(\epsilon) > d^{-}_2(\epsilon)$ and $N^K_2(0)=1$, using Proposition \ref{proposition:couplingwithbranchingprocesses}, we can construct two coupled supercritical branching processes $B^K_{-}, B^K_{+}$ with distributions ${\bf P}(b_2, d^{+}_2(\epsilon),1), {\bf P}(b_2, d^{-}_2(\epsilon),1)$ such that $B^K_{-}(t) \leq N^K_2(t) \leq B^K_{+}(t)$ for all $t \leq T_{\mathcal{S}_K}$ almost surely. Let $\gamma^+_K$ be given by (\ref{defgammak+}) and $\gamma^{-}_K$ be the corresponding stopping time for $B^K_{-}$. The fact that $B^K_{-}$ is below $N^K_2$ until time $T_{\mathcal{S}_K}$, \eqref{etape3} and part (C) allow us to prove that:
\begin{multline}\lim_{K\rightarrow +\infty}\P \left( \gamma_K^+ \leq t_K, B^K_{+}(\gamma_K^+) = 0 \right)\leq \lim_{K\rightarrow +\infty} \P \left( T_{\mathcal{S}_K} \leq t_K, N^K_2 \left( T_{\mathcal{S}_K} \right) = 0 \right)\\
\leq \lim_{K\rightarrow +\infty}\P \left( \gamma_K^- \leq t_K, B^K_{-}(\gamma_K^-) = 0 \right).\label{dominationabovebelowlimit}
\end{multline}
Since $B^K_{+}$ is above $N^K_2$ until time $T_{\mathcal{S}_K}$, we obtain:
\begin{equation}
\lim_{K\rightarrow +\infty} \P\left(T_{\mathcal{S}_K}\leq t_K,\ N^K_2(T_{\mathcal{S}_K})\geq K\epsilon\right)\leq \lim_{K\rightarrow+\infty} \P\left(\gamma_K^+ \leq t_K,\ B^K_{+}(\gamma_K^+)\geq K\epsilon\right).
\label{upperdomination1}
\end{equation}
Finally, a proof similar to the one of \eqref{etape3}, we can prove that:
\begin{align}
\label{upperdomination2}
\lim_{K \to \infty}\P \left( \gamma_K^- \leq t_K, B^K_{-}(\gamma_K^-) \geq  K \epsilon \right) \leq \lim_{K \to \infty} \P \left( T_{\mathcal{S}_K} \leq t_K, N^K_2 \left( T_{\mathcal{S}_K} \right) \geq K \epsilon \right).
\end{align}
From part (D) of Lemma \ref{branchingprocessbehaviourlemma} and Assumption \ref{timescaleseparation} we can deduce that
\begin{align*}
&1 - \lim_{K \to \infty} \P \left( \gamma^{+}_K  \leq t_K,  B^K_{+} \left( \gamma^{+}_K  \right) \geq K \epsilon \right) = \lim_{K \to \infty} \P \left( \gamma^{+}_K  \leq t_K,  B^K_{+} \left( \gamma^{+}_K  \right) = 0 \right) = \frac{d^{-}_2(\epsilon)}{b_2}\\
\mbox{ and }\quad  &  1-\lim_{K \to \infty} \P \left( \gamma^{-}_K  \leq t_K,  B^K_{-} \left( \gamma^{-}_K  \right) \geq K \epsilon \right) =  \lim_{K \to \infty} \P \left( \gamma^{-}_K  \leq t_K,  B^K_{-} \left( \gamma^{-}_K  \right) = 0 \right) = \frac{d^{+}_2(\epsilon)}{b_2}.
\end{align*}
These relations along with \eqref{dominationabovebelowlimit}, \eqref{upperdomination1} and \eqref{upperdomination2} prove part (E).
\end{proof}

\bigskip

\noindent \textbf{Acknowledgments:} The authors thank Sylvie M\'el\'eard for invaluable discussion. This work benefited from the support of the ANR MANEGE (ANR-09-BLAN-0215), from the Chair ``Mod\'elisation Math\'ematique et Biodiversit\'e" of Veolia Environnement-Ecole Polytechnique-Museum National d'Histoire Naturelle-Fondation X.

{\footnotesize
\providecommand{\noopsort}[1]{}\providecommand{\noopsort}[1]{}\providecommand{\noopsort}[1]{}\providecommand{\noopsort}[1]{}

}


\end{document}